\newcommand{\AN}{\overline{\mathbb{Q}}}
\newcommand{\CN}{\mathbb{C}}
\newcommand{\QN}{\mathbb{Q}}
\newcommand{\KN}{\mathbb{K}}
\newcommand{\LN}{\mathbb{L}}
\newcommand{\NN}{\mathbb{N}}
\newcommand{\ZN}{\mathbb{Z}}
\newcommand{\st}{\mid}
\newcommand{\forAlln}{\quad \text{ for all } n \in \NN}
\newcommand{\CFin}{\ensuremath{\mathcal{R}_C}}
\newcommand{\fractions}[1]{Q(#1)} 
\newcommand{\shift}{\sigma}
\newcommand{\op}[1]{\mathcal{\MakeUppercase{#1}}} 
\newcommand{\diffalg}[2]{#1_\shift[#2]} 
\newcommand{\torsion}[1]{T(#1)} 
\def\i{\mathrm{i}} 
\newcommand{\norm}[1]{\left\lVert#1\right\rVert_{2}}
\newcommand{\norminf}[1]{\left\lVert#1\right\rVert_\infty}
\newcommand{\abs}[1]{\left\lvert #1 \right\rvert}
\DeclareMathOperator{\diag}{diag} 
\DeclareMathOperator{\ord}{ord}
\DeclareMathOperator{\lcm}{lcm}
\theoremstyle{definition}
\newtheorem{dfn}{Definition}
\newtheorem{thm}[dfn]{Theorem}
\newtheorem{lem}[dfn]{Lemma}
\newtheorem{ex}[dfn]{Example}
\renewcommand{\SetKwInOut}[2]{%
  \sbox\algocf@inoutbox{\KwSty{#2}\algocf@typo:}%
  \expandafter\ifx\csname InOutSizeDefined\endcsname\relax
    \newcommand\InOutSizeDefined{}\setlength{\inoutsize}{\wd\algocf@inoutbox}%
    \sbox\algocf@inoutbox{\parbox[t]{\inoutsize}{\KwSty{#2}\algocf@typo:\hfill}~}\setlength{\inoutindent}{\wd\algocf@inoutbox}%
  \else
    \ifdim\wd\algocf@inoutbox>\inoutsize%
    \setlength{\inoutsize}{\wd\algocf@inoutbox}%
    \sbox\algocf@inoutbox{\parbox[t]{\inoutsize}{\KwSty{#2}\algocf@typo:\hfill}~}\setlength{\inoutindent}{\wd\algocf@inoutbox}%
    \fi%
  \fi
  \algocf@newcommand{#1}[1]{%
    \ifthenelse{\boolean{algocf@inoutnumbered}}{\relax}{\everypar={\relax}}%
    {\let\\\algocf@newinout\hangindent=\inoutindent\hangafter=1\parbox[t]{\inoutsize}{\KwSty{#2}\algocf@typo:\hfill}~##1\par}%
    \algocf@linesnumbered
  }}%
\title{Order bounds for $C^2$-finite sequences\thanks{The research was funded by the Austrian Science Fund (FWF) under the grant W1214-N15, project DK15 and under the grants P33530, P31571, and I6130.}}
\date{}
\author[1]{Manuel Kauers}
\author[2]{Philipp Nuspl}
\author[2]{Veronika Pillwein}
\affil[ ]{\normalfont manuel.kauers@jku.at, philipp.nuspl@jku.at, veronika.pillwein@risc.jku.at \vspace{0.4cm}}
\affil[1]{Johannes Kepler University Linz, Institute for Algebra}
\affil[2]{Johannes Kepler University Linz, Research Institute for Symbolic Computation}
\begin{document}

\maketitle

\begin{abstract}
A sequence is called $C$-finite if it satisfies a linear recurrence with constant coefficients. We study sequences which satisfy a linear recurrence with $C$-finite coefficients. Recently, it was shown that such $C^2$-finite sequences satisfy similar closure properties as $C$-finite sequences. In particular, they form a difference ring. 

In this paper we present new techniques for performing these closure properties of $C^2$-finite sequences. These methods also allow us to derive order bounds which were not known before. Additionally, they provide more insight in the effectiveness of these computations. 

The results are based on the exponent lattice of algebraic numbers. We present an iterative algorithm which can be used to compute bases of such lattices.
\end{abstract}

\maketitle


\section{Introduction} \label{sec:introduction}
Infinite objects that can be represented by a finite amount of information and
that can be effectively computed with, e.g., by means of closure properties, are
natural objects of study in symbolic computation. This includes in particular
sequences that can be defined by linear recurrences with coefficients that, in
turn, have a finite description. If these coefficients are polynomials, the
sequences are called holonomic or $D$-finite and the special case of constant
coefficients is referred to as $C$-finite sequences.

It is well known~\cite{kauers11,mallinger96} that these form classes that are closed under
several operations such as addition, multiplication, interlacing, taking
subsequences, etc. These closure properties are algorithmic, have been
implemented in several computer algebra systems, and contribute to the
``holonomic toolkit''~\cite{kauers13} for automatically proving and deriving
identities.

It has been shown~\cite{nuspl21,nuspl21b,nuspl22b,nuspl22c} that many of these
closure properties also hold and can be implemented for sequences that are
defined by linear recurrences with $C$-finite coefficients, also called
$C^2$-finite. To our knowledge, $C^2$-finite sequences have first been
introduced formally by Kotek and Makowsky~\cite{kotek14} in the context of graph
polynomials. Thanatipanonda and Zhang~\cite{thanatipanonda20} give an overview
on different properties of polynomial, $C$-finite and holonomic sequences and
consider the extension under the name $X$-recursive sequences. A survey on closure properties of linear recurrence sequences including $C^2$-finite sequences is given by Krityakierne and Thanatipanonda~\cite{krityakierne22}.

The main computational issue when dealing with this new class of $C^2$-finite
sequences is the presence of zero divisors. Even though it was shown that
$C^2$-finite sequences form a difference ring, so far it was not clear whether
their closure properties can be effectively computed, see also the discussion in
Section~\ref{subsec:c2} below.

In this paper, we introduce a new method for executing closure properties that,
in particular, comes with order bounds.  A key ingredient for this technique is
the computation of a basis for the exponent lattice for the eigenvalues of the
coefficient recurrences. For the computation, we introduce an iterative version
of Ge's algorithm~\cite{ge93} described in Section~\ref{sec:expLattice}.  The
resulting order bounds for the ring operations, interlacing, and taking
subsequences are presented in Section~\ref{sec:orderBounds}.


\section{Preliminaries} \label{sec:preliminaries}

In this section we introduce some basic notation and definitions which are used throughout the paper. We denote the set of natural numbers by $\NN = \{ 0,1,2,\dots \}$. Furthermore, $\KN \supseteq \QN$ denotes an algebraic number field. The $\KN$-algebra of sequences under termwise addition and termwise multiplication is denoted by $\KN^\NN$. For the sake of a cleaner notation, $c(n)$ can denote both a sequence $(c(n))_{n \in \NN}$ and the term at index~$n$. The meaning is always clear from the context. The \emph{shift operator} $\shift$ acts as $\shift((a(n))_{n \in \NN}) = (a(n+1))_{n \in \NN}$ on a sequence $(a(n))_{n \in \NN}$. A difference subring $R \subseteq \KN^\NN$ is a subring which is additionally closed under taking shifts, i.e., $\shift(a) \in R$ for all $a \in R$. The ring of recurrence operators $R[\shift]$ is, in general, non-commutative and an element $\op{a} \coloneqq \sum_{i=0}^r c_i \shift^i \in R[\shift]$ with $c_i \in R$ acts on a sequence $a=(a(n))_{n \in \NN}$ as $\op{a}a  = \sum_{i=0}^r c_i(n) a(n+i)$. If $\op{a} a = 0$, we say that the operator $\op{a}$ annihilates the sequence~$a$. If $c_r \neq 0$, then~$r$ is called the \emph{order} of the operator~$\op{A}$. The minimal order of an operator which annihilates~$a$ is called the order of the sequence $a$ and is denoted by~$\ord(a)$. 

\subsection{$C$-finite sequences} \label{subseq:cfin}

Sequences $c \in \KN^\NN$ which are annihilated by an operator $\op{c} = \sum_{i=0}^r \gamma_i \shift^i \in \KN[\shift]$ are called \emph{$C$-finite}. Equivalently, these are sequences that satisfy a linear recurrence with constant coefficients 
    \[ \gamma_0 c(n) + \dots + \gamma_r c(n+r) = 0 \forAlln. \]
The set of $C$-finite sequences over~$\KN$ forms a $\KN$-algebra which we denote by $\CFin$. Suppose $c,d,c_1,\dots,c_m \in \CFin$. Then, the following \emph{closure properties} are well known (e.g., \cite{kauers11}):
\begin{enumerate}
    \item $c+d \in \CFin$ with $\ord(c+d) \leq \ord(c)+\ord(d)$,
    \item $cd \in \CFin$ with $\ord(cd) \leq \ord(c) \ord(d)$,
    \item $c_{\ell,k} \coloneqq (c(\ell n+k))_{n \in \NN} \in \CFin$ with $\ord(c_{\ell,k}) \leq \ord(c)$ for all $\ell, k \in \NN$. 
    \item Let $e$ be the interlacing of $c_1,\dots,c_m$, i.e., $e(n)=c_r(q)$ for all $n=qm+r$ with $0\leq r < m$. Then, $e \in \CFin$ and $\ord(e) \leq m \sum_{j=1}^m \ord (c_j)$. 
\end{enumerate}
The same closure properties and order bounds hold for $D$-finite sequences, i.e., sequences which are annihilated by an operator $\op{a} \in \KN[x][\shift]$ \cite{mallinger96,kauers14}.

Let $\op{c} \coloneqq \sum_{i=0}^{r-1} \gamma_i \shift^i + \shift^r$ be the unique monic minimal annihilating operator of $c \in \CFin$. The polynomial $\sum_{i=0}^{r-1} \gamma_i x^i + x^r \in \KN[x]$ is called the \emph{characteristic polynomial} of~$c$. Over the splitting field~$\LN$ the polynomial completely factors as $x^{n_0} \prod_{i=1}^m (x-\lambda_i)^{d_i}$ with pairwise different $\lambda_1,\dots,\lambda_m \in \LN$ and $n_0,d_1,\dots,d_m \in \NN$. We call these $\lambda_i$ the \emph{eigenvalues} of the sequence~$c$. The sequence can also be written as polynomial-linear combination of exponential sequences $\lambda_i^n$: In particular, there are polynomials $p_1,\dots,p_m \in \LN[x]$ with $\deg(p_i) = d_i-1$ for $i=1,\dots,m$ such that 
    \begin{align} \label{eq:closedForm}
        c(n) = \sum_{i=1}^m p_i(n) \lambda_i^n \text{ for all } n \geq n_0.
    \end{align}
This is called the \emph{closed form} of~$c$ \cite{kauers11,melczer21}. 

A $C$-finite sequence~$c$ is called \emph{degenerate} if it has eigenvalues $\lambda \neq \mu$ such that $\tfrac{\lambda}{\mu}$ is a root of unity. Otherwise, the sequence is called \emph{non-degenerate}. 

\begin{thm}[\cite{berstel76,everest15}]
    Let~$c$ be a non-degenerate $C$-finite sequence. Then,~$c$ is either the zero sequence or it only has finitely many zeros, i.e., there is an~$n_0 \in \NN$ such that $c(n) \neq 0$ for all $n \geq n_0$.
\end{thm}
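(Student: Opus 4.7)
My plan is to derive the statement from the Skolem--Mahler--Lech theorem, which guarantees that the zero set $Z(c) = \{n \in \NN : c(n) = 0\}$ of any $C$-finite sequence over a field of characteristic zero is a finite union of arithmetic progressions together with finitely many extra points. This is the deep ingredient of the argument, and I would cite it rather than reprove it here.

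Suppose $c$ has infinitely many zeros. Then $Z(c)$ must contain at least one infinite arithmetic progression $\{a + kd : k \in \NN\}$ with $d \geq 1$, and so the subsequence $u(n) \coloneqq c(a + nd)$ is identically zero. Substituting the closed form~\eqref{eq:closedForm} yields, for $n$ large enough,
\[ 0 = u(n) = \sum_{i=1}^m p_i(a+nd)\,\lambda_i^{a+nd} = \sum_{i=1}^m q_i(n)\,\mu_i^n, \]
where I set $\mu_i \coloneqq \lambda_i^d$ and $q_i(n) \coloneqq \lambda_i^a\, p_i(a+nd) \in \LN[n]$.

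Non-degeneracy enters precisely at this point: since $\lambda_i/\lambda_j$ is not a root of unity for $i \neq j$, none of the powers $(\lambda_i/\lambda_j)^d$ equals $1$, so the $\mu_i$ are pairwise distinct nonzero elements of $\LN$. By the standard linear independence of the sequences $(n^k \mu^n)_{n \in \NN}$ indexed by $k \in \NN$ and distinct nonzero $\mu \in \LN$, each polynomial $q_i$ must vanish identically, and because $d \neq 0$ this forces $p_i \equiv 0$ for every $i$. The closed form then yields $c(n) = 0$ for all $n \geq n_0$; combined with the minimality of the characteristic polynomial $x^{n_0} \prod_i (x - \lambda_i)^{d_i}$, this is possible only if $m = 0$, and then $c$ must in fact be the zero sequence.

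The main obstacle is the Skolem--Mahler--Lech theorem itself, whose known proofs rely on $p$-adic analysis and are considerably deeper than the rest of the argument. Once it is granted, what remains is essentially a linear-algebra reduction powered by the distinctness of the $\mu_i$, which is the one point at which the non-degeneracy hypothesis is used.
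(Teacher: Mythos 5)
The paper itself gives no proof of this statement---it is quoted from \cite{berstel76,everest15}---so there is nothing internal to compare with; your reduction to the Skolem--Mahler--Lech theorem is the standard derivation, and its core is sound: an infinite zero set must contain a full arithmetic progression, vanishing along that progression forces each $q_i$ to vanish because the $\mu_i=\lambda_i^d$ are pairwise distinct (exactly where non-degeneracy enters, and where you also implicitly use $\lambda_i\neq 0$, consistent with the paper's convention that the factor $x^{n_0}$ absorbs the zero root), hence $p_i\equiv 0$ for all $i$ and $c(n)=0$ for all $n\geq n_0$.

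The genuine gap is the final inference. From $p_i\equiv 0$ for every $i$ (equivalently $m=0$) you may conclude only that $c(n)=0$ for all $n\geq n_0$, not that $c$ is the zero sequence. Concretely, the sequence $c=(1,0,0,\dots)$ is annihilated by $\shift$, has characteristic polynomial $x$, so $m=0$ and it is non-degenerate in the paper's sense; it is not the zero sequence, yet it has infinitely many zeros. So the step ``this is possible only if $m=0$, and then $c$ must in fact be the zero sequence'' does not follow---and this example shows that the statement as transcribed silently relies on the convention of the cited sources that the trailing coefficient of the minimal recurrence is nonzero, i.e.\ that $0$ is not a root of the characteristic polynomial, which forces $n_0=0$. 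Under that convention your argument closes correctly, since vanishing for all $n\geq n_0=0$ is identical vanishing. You should either state this hypothesis explicitly or weaken your final sentence to ``$c(n)=0$ for all $n\geq n_0$'', which is in any case the form of the conclusion the rest of the paper actually uses.
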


Suppose~$c,d$ are $C$-finite sequences with eigenvalues~$\lambda_1,\dots,\lambda_{m_1}$ and $\mu_1,\dots,\mu_{m_2}$, respectively. From the closed form of~$c$ and~$d$, it is clear, that $c+d$ has eigenvalues~$\lambda_1,\dots,\lambda_{m_1},\mu_1,\dots,\mu_{m_2}$ and $cd$ has eigenvalues~$\lambda_i \mu_j$ with $1 \leq i \leq m_1$ and $1 \leq j \leq m_2$. The sequence $(c(\ell n+k))_{n \in \NN}$ has eigenvalues~$\lambda_1^\ell,\dots,\lambda_m^\ell$.

\subsection{$C^2$-finite sequences}\label{subsec:c2}

A generalization of $C$-finite sequences are $C^2$-finite sequences. These extend $C$-finite and $D$-finite sequences and include many more sequences which appear in combinatorics.

\begin{dfn} \label{def:C2finite}
    A sequence $a \in \KN^\NN$ is called $C^2$-finite over $\KN$ if there are $C$-finite sequences $c_0,\dots,c_r$ over $\KN$ with $c_r(n)\neq 0$ for all~$n \geq n_0$ for some $n_0 \in \NN$ such that
        \begin{align} \label{eq:C2finite}
            c_0(n) a(n) + \dots + c_r(n) a(n+r) = 0, \forAlln.
        \end{align} 
\end{dfn}

Several examples for $C^2$-finite sequences are given in~\cite{thanatipanonda20,nuspl21b}. Throughout this article, additional examples are given. 

A $C$-finite sequence~$c$ can be uniquely described by a minimal recurrence and~$\ord(c)$ many initial values. Similarly, a $C^2$-finite sequence can be described uniquely by its recurrence and by finitely many initial values. The number of initial values which is needed to uniquely determine the sequence depends on the zeros of the leading coefficient $c_r$ of the recurrence. It can be decided whether the leading coefficient only has finitely many zeros~\cite{berstel76}. However, it is not known if these finitely many zeros can be computed. This is known as the \emph{Skolem problem}~\cite{ouaknine12}. 

Previously, it was shown that $C^2$-finite sequences, analogously to $C$-finite sequences, form a $\KN$-algebra and they are furthermore closed under taking subsequences at arithmetic progressions and interlacing~\cite{nuspl21,nuspl21b}. So far, it was not known whether these closure properties can be computed effectively. In this article we show a method how these closure properties can be performed effectively. As a caveat, these computations might introduce finitely many zeros in the leading coefficient which can yield to problems when one has to decide how many initial values are needed to uniquely define the sequence. In practice we have, however, observed that even though the Skolem problem is very difficult in general it can usually be solved for most examples that appear in practice \cite{nuspl22}.

Note that in~\cite{nuspl21,nuspl21b} it is assumed that the leading coefficient~$c_r$ in~\eqref{eq:C2finite} has no zero terms at all. The two definitions are equivalent. If a sequence satisfies a $C^2$-finite recurrence as in Definition~\ref{def:C2finite}, then shifting the recurrence yields a recurrence where the leading coefficient has no zero terms. The definition here allows us to derive bounds for the orders of closure properties similar to the $C$-finite case that cannot be derived otherwise (see Example~\ref{ex:singularity}). 

If $c_r(n) \neq 0$, then the recurrence~\eqref{eq:C2finite} can be used to compute the term~$a(n+r)$ provided that the previous terms~$a(n),\dots,a(n+r-1)$ are known:
\[ a(n+r) = - \tfrac{c_0(n)}{c_r(n)} a(n) - \dots - \tfrac{c_{r-1}(n)}{c_r(n)} a(n+r-1). \]
This is also captured by the companion matrix~$M_a$ of~$a$ which is defined as 
\begin{align*}
    M_a \coloneqq 
    \begin{pmatrix}
        0 & 0 & \dots & 0 & -c_0/c_r \\
        1 & 0 & \dots & 0 & -c_1/c_r \\
        0 & 1 & \dots & 0 & -c_2/c_r \\
        \vdots & \vdots & \ddots & \vdots & \vdots \\
        0 & 0 & \dots & 1 & -c_{r-1}/c_r
    \end{pmatrix}.
\end{align*} 
If~$c_r(n) \neq 0$ for all~$n \in \NN$, then 
\[ (\shift a,\shift^2 a, \dots,\shift^r a) = (a,\shift a, \dots,\shift^{r-1} a) \, M_a. \]
In the special case that~$a$ is a $C$-finite sequence, we have $M_a \in \KN^{r \times r}$. Since the recurrence of a sequence is not unique, neither is the companion matrix. 

In the recurrence~\eqref{eq:C2finite} we can assume that $n_0=0$ holds in the closed form~\eqref{eq:closedForm} for all coefficients $c_0,\dots,c_r$. This can be achieved by extending the closed form representation to all $n \in \NN$ and introducing polynomial factors $n(n-1)\cdots (n-n_0)$ in all coefficients~$c_i$. This only increases the order of the coefficients~$c_i$ and leaves the order of the overall recurrence intact.

\subsection{Lattices}

A $\ZN$-submodule $L$ of $\ZN^m$ is called a lattice. Every lattice~$L$ admits a finite basis $v_1,\dots,v_\ell \in \ZN^m$, i.e., a set of linearly independent generators of the module~$L$. We call~$\ell$ the \emph{rank} of the lattice~$L$. 

The LLL algorithm can be used to compute a basis of ``short'' vectors for the lattice~$L$~\cite{lenstra82,cohen13}. Such a basis is called a \emph{reduced} basis. Let $L=\langle v_1,\dots,v_\ell \rangle \subseteq \ZN^m$, i.e., $L$ is the lattice generated by~$v_1,\dots,v_\ell \in \ZN^m$. Let $b_1,\dots,b_r$ be a reduced basis of~$L$ and $\bar b_1,\dots,\bar b_r$ the corresponding Gram-Schmidt vectors. The reduced basis is ``short'' in the sense that (cf. (1.7)~in~\cite{lenstra82} or Theorem~2.6.2 in~\cite{cohen13}) 
\begin{align} \label{eq:lllProperty}
    \norm{b_j}^2 \leq 2^{k-1} \norm{\bar b_k}^2 \text{ for all } 1 \leq j \leq k \leq r.
\end{align}

Suppose~$V \in \ZN^{m \times \ell}$ and $r=\min(m,\ell)$. Then, we can compute unimodular (i.e., invertible) matrices $P \in \ZN^{m \times m}, Q \in \ZN^{\ell \times \ell}$ and a diagonal matrix $D = \diag(d_1,\dots,d_r) \in \ZN^{m \times \ell}$ with $d_i \mid d_{i+1}$ for all $i=1,\dots,r-1$ such that $PVQ=D$. The unique matrix~$D$ is called the \emph{Smith normal form} of~$V$ and the largest diagonal entry~$d_r$ is called the \emph{invariant factor} of~$V$. If $e_i$ denotes the $i$-th determinantal divisor of~$V$, i.e., the greatest common divisor of all $i$-by-$i$ minors of~$V$, then $d_r=\tfrac{e_{r}}{e_{r-1}}$ \cite{newman72,middeke19}.


\section{The exponent lattice of algebraic numbers} \label{sec:expLattice}

Let $\lambda_1,\dots,\lambda_m \in \AN$. The set of relations of these algebraic
numbers
\[
L \coloneqq L(\lambda_1,\dots,\lambda_m) \coloneqq \{ (e_1,\dots,e_m) \in \ZN^m \st \lambda_1^{e_1} \cdots \lambda_m^{e_m} = 1 \}
\] 
forms a lattice.
In his PhD thesis~\cite{ge93} Ge gave an algorithm for computing a basis of~$L$.
It is a combination of LLL with a bound on the size
of the basis vectors and the fact that membership of $L$ is easy to decide.
Variants of Ge's algorithm were given in \cite{kauers05,faccin14,zheng19,zheng20,zheng21}.
Here we present another variant.
Our version is inspired by how LLL is applied in van Hoeij's algorithm for polynomial factorization~\cite{hoeij02,hoeij12,hoeij13}.
One feature of this version is that it uses approximations that are only as good as necessary for the
particular input, rather than approximations whose accuracy is determined by the worst case behavior.
Another advantage of our version is that its correctness admits a very concise proof.

Like Ge, we start by observing that 
\begin{align*}
    (e_1,\dots,e_m)\in L(\lambda_1,\dots,\lambda_m)
\iff \lambda_1^{e_1}\cdots\lambda_m^{e_m}=1 
\iff \sum_{i=1}^m e_i\log(\lambda_i) \in 2\pi\i \ZN.
\end{align*}
Hence, instead of finding a basis for $L=L(\lambda_1,\dots,\lambda_m)$, we can compute a basis of the lattice 
\begin{align*}
  L_+ = \left\{(e_1,\dots,e_{m+1})\in \ZN^m \st \sum_{i=1}^m e_i \log(\lambda_i) + e_{m+1} 2 \pi \i = 0\right\}
\end{align*}
and drop the last coordinates to find a basis of the original lattice~$L$. If we agree to always choose the standard branch of the logarithm, the last coordinate will
be bounded by $md$, where $d$ is the degree of the field extension of~$\lambda_1,\dots,\lambda_m$. By a result by Masser~\cite{masser88}, we can compute a constant~$M \geq md$ such that~$L$ and therefore~$L_+$ have a basis of vectors $b$ with $\norminf{b} \leq M$. 

It remains to provide an algorithm which can compute a basis of 
\begin{align*}
  L_+ = \left\{(e_1,\dots,e_n)\in\ \ZN^n \st e_1x_1+\cdots+e_nx_n = 0\right \}
\end{align*}
where $x_1,\dots,x_n\in \CN \setminus\{0\}$. Due to the special shape of the $x_i$ in our case, we can compute rational approximations $\xi_i \in \QN(\i)$ of arbitrary precision \cite{brent76}. In particular, for every $\epsilon>0$ we can compute $\xi_1,\dots,\xi_n\in\QN(\i)$ such that $\abs{\Re(x_i)-\Re(\xi_i)}<\epsilon$ and $\abs{\Im(x_i)-\Im(\xi_i)}<\epsilon$ for all~$i=1,\dots,n$. Furthermore, we can use the fact that membership $(e_1,\dots,e_n) \in L_+$ can be checked and that we know that a basis with vectors bounded by~$M$ exists. 

\stepcounter{dfn} 

\begin{algorithm}[h]
    \caption{Computing a basis for~$L_+$}\label{alg:basisLplus}
    \SetKwInOut{Input}{Input}
    \SetKwInOut{Assumption}{Needs}
    \SetKwInOut{Output}{Output}
    \DontPrintSemicolon

    \Input{Computable numbers $x_1,\dots,x_n\in \CN \setminus\{0\}$ and $M \in \QN$ such that the lattice \[L_+ = \left\{(e_i)_{i = 1,\dots,n}\in\ \ZN^n \st \sum_{i=1}^n e_ix_i= 0\right \}\]
    has a basis of vectors $b \in \ZN^n$ with $\norminf{b} \leq M$}
    \Assumption{One can decide whether $b \in L_+$ for any $b \in \ZN^n$}
    \Output{A basis of $L_+$}
    $w \gets 1$\;
    $B \gets \{(1,0,\dots,0,0,0),\dots, (0,\dots,0,1,0,0)\}\subseteq\ZN^{n+2}$\;
    \While{$\exists\ (e_1,\dots,e_n,\ast,\ast)\in B \colon (e_1,\dots,e_n)\not\in L_+$}{ \nllabel{alg:loopCondition}
        $w \gets 2w$ \;
        find $\xi_1,\dots,\xi_n\in\QN(\i)$ with $\abs{\Re(\xi_i)-\Re(x_i)}<\frac1{nw}$ and $\abs{\Im(\xi_i)-\Im(x_i)}<\frac1{nw}$ for all~$i=1,\dots,n$\;
        replace every vector $(e_1,\dots,e_n,\ast,\ast)\in B$ by \[\left(e_1,\dots,e_n, w \sum_{i=1}^n e_i\Re(\xi_i), w \sum_{i=1}^n e_i\Im(\xi_i)\right)\]\;
        apply LLL to $B$, call the output vectors $b_1,\dots,b_r$ and the corresponding Gram-Schmidt vectors $\bar b_1,\dots,\bar b_r$ \; 
        \lWhile{$r>0$ and $\norm{\bar b_r}>\sqrt{n+2}M$}{$r \gets r-1$}
        $B \gets \{b_1,\dots,b_r\}$ \nllabel{alg:removeVectors} \;
    }
    \Return{$\{(e_1,\dots,e_n):(e_1,\dots,e_n,\ast,\ast)\in B\}$}
\end{algorithm}

For proving the correctness of Algorithm~\ref{alg:basisLplus} we will employ the following lemma:

\begin{lem} \label{lemma:gramSchmidt} \cite[Lemma~2]{hoeij12}
If $b_1,\dots,b_r$ is a lattice basis and $\bar b_1,\dots,\bar b_r$ is the corresponding Gram-Schmidt basis,
then for every $v\in\langle b_1,\dots,b_r \rangle$ with $\norm{v}<\norm{\bar b_r}$ we have in fact $v\in\langle b_1,\dots,b_{r-1} \rangle$.
\end{lem}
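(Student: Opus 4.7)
The plan is to expand $v$ in the lattice basis $b_1,\dots,b_r$ with integer coefficients and then re-express it in the orthogonal Gram--Schmidt basis $\bar b_1,\dots,\bar b_r$, so that orthogonality gives a clean lower bound on $\norm{v}$ in terms of the top coefficient.

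First I would write $v = \sum_{i=1}^r a_i b_i$ with $a_i \in \ZN$, which is possible because $b_1,\dots,b_r$ is a basis of the lattice containing $v$. The goal becomes showing $a_r = 0$. Next, I would use the defining relation of the Gram--Schmidt process, namely $b_i = \bar b_i + \sum_{j<i} \mu_{ij}\bar b_j$ for some real coefficients $\mu_{ij}$. Substituting this into the expansion of $v$, the vector $\bar b_r$ appears only through $b_r$, so one gets
\[
v = a_r \bar b_r + \sum_{j=1}^{r-1} c_j \bar b_j
\]
for some $c_j \in \mathbb{R}$. This is the key algebraic observation.

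Since the $\bar b_j$ are pairwise orthogonal, the Pythagorean identity yields
\[
\norm{v}^2 = a_r^2\,\norm{\bar b_r}^2 + \sum_{j=1}^{r-1} c_j^2\,\norm{\bar b_j}^2 \;\geq\; a_r^2\,\norm{\bar b_r}^2,
\]
and hence $\norm{v} \geq |a_r|\,\norm{\bar b_r}$. Combining this with the hypothesis $\norm{v} < \norm{\bar b_r}$ gives $|a_r| < 1$. Because $a_r$ is an integer, this forces $a_r = 0$, so $v = \sum_{i=1}^{r-1} a_i b_i \in \langle b_1,\dots,b_{r-1}\rangle$, as claimed.

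There is essentially no hard step here: the only point to be careful about is the observation that in the Gram--Schmidt expansion of $v$ the coefficient of $\bar b_r$ is exactly the integer $a_r$ (and not a perturbed real number), which follows from the triangular shape of the change-of-basis from $b_i$ to $\bar b_i$. Everything else is orthogonality plus the integrality of lattice coordinates.
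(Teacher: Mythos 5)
Your proof is correct and follows essentially the same route as the paper's: expand $v$ in the lattice basis, use the unitriangular Gram--Schmidt change of basis to see that the coefficient of $\bar b_r$ is exactly the integer $a_r$, and conclude $a_r=0$ from Pythagoras and the bound $\norm{v}<\norm{\bar b_r}$. No gaps.
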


\begin{proof}
Let $v\in\langle b_1,\dots,b_r \rangle$ be such that $\norm{v}<\norm{\bar b_r}$, say
$v=\alpha_1b_1+\cdots+\alpha_rb_r$ for certain $\alpha_1,\dots,\alpha_r\in\ZN$.
We have to show that $\alpha_r=0$. Let $\mu_{i,j}$ be such that
$b_i=\sum_{j\leq i}\mu_{i,j}\bar b_j$ for all $i,j$; note that $\mu_{i,i}=1$. 
Now
\begin{alignat*}1
  \norm{\bar b_r}^2&>\norm{v}^2=\norm{\sum_{i=1}^r\alpha_ib_i}^2=\norm{\sum_{i=1}^r\sum_{j=1}^i\alpha_i\mu_{i,j}\bar b_j}^2\\
  &=\norm{\sum_{j=1}^r\biggl(\sum_{i=j}^r\alpha_i\mu_{i,j}\biggr)\bar b_j}^2
  \overset{\text{Pythagoras}}=\sum_{j=1}^r\abs{\sum_{i=j}^r\alpha_i\mu_{i,j}}^2\,\norm{\bar b_j}^2\\
  &=\underbrace{\sum_{j=1}^{r-1}\abs{\sum_{i=j}^r\alpha_i\mu_{i,j}}^2\,\norm{\bar b_j}^2}_{\geq0}
  + \abs{\alpha_r}^2\,\norm{\bar b_r}^2\geq \abs{\alpha_r}^2\,\norm{\bar b_r}^2
\end{alignat*}
together with $\alpha_r\in\ZN$ forces $\alpha_r=0$, as claimed.
\end{proof}

\begin{thm} \label{theorem:correctnessAlgorithm}
Algorithm~\ref{alg:basisLplus} is correct and terminates.
\end{thm}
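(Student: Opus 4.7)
The plan is to prove correctness and termination separately, both resting on the invariant that after every execution of the loop body $B$ is a basis of a sublattice $\Lambda \subseteq \QN^{n+2}$ whose projection onto the first $n$ coordinates contains every $e \in L_+$ with $\norminf{e} \leq M$.

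To establish the invariant I would first note that initially $B$ projects onto all of $\ZN^n$, then check maintenance. Given $e \in L_+$ with $\norminf{e} \leq M$, the vector $b \coloneqq (e, w\sum_i e_i \Re(\xi_i), w \sum_i e_i \Im(\xi_i))$ lies in $\langle B \rangle$ before the removal step. Because $\sum_i e_i x_i = 0$ and the approximation errors satisfy $\abs{\Re(\xi_i) - \Re(x_i)}, \abs{\Im(\xi_i) - \Im(x_i)} < \tfrac{1}{nw}$, each of the last two coordinates has absolute value at most $\norminf{e} \leq M$. Hence $\norm{b} \leq \sqrt{n+2}\,M$, and since the removal step only drops $b_r$ when $\norm{\bar b_r} > \sqrt{n+2}\,M$, Lemma~\ref{lemma:gramSchmidt} guarantees that $b$ still lies in the span of the remaining vectors. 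Upon loop exit every $(e,\ast,\ast) \in B$ satisfies $e \in L_+$, so the projection of $\langle B \rangle$ is a sublattice of $L_+$; combined with the invariant and Masser's bound (which supplies a basis of $L_+$ with entries of absolute value at most $M$) this sublattice equals $L_+$, and since the projection map is injective on $\langle B \rangle$ the images of the $B$-vectors form a basis of $L_+$.

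The main obstacle is termination, which requires a bound on the LLL output norms that is uniform in $w$. Applying \eqref{eq:lllProperty} with $k = r$ to the basis after the removal step gives $\norm{b_j}^2 \leq 2^{r-1}\norm{\bar b_r}^2 \leq 2^{r-1}(n+2)M^2$, and since LLL preserves the rank and the removal step can only decrease it, $r \leq n$ throughout. Setting $C \coloneqq 2^{(n-1)/2} \sqrt{n+2}\,M$, every first-$n$ part $e$ appearing in $B$ after the removal step satisfies $\norminf{e} \leq C$. The finite set $S \coloneqq \{e \in \ZN^n : \norminf{e} \leq C\} \setminus L_+$ then satisfies $\delta \coloneqq \min_{e \in S} \abs{\sum_i e_i x_i} > 0$. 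For any $e \in S$, at least one of $\abs{\Re(\sum_i e_i x_i)}, \abs{\Im(\sum_i e_i x_i)}$ is $\geq \delta/\sqrt{2}$, so the corresponding last-two coordinate of the embedding of $e$ has absolute value at least $w\delta/\sqrt{2} - C$. For $w > 2\sqrt{2}\,C/\delta$ this exceeds $C$, contradicting $\norm{b} \leq C$. Hence after finitely many doublings of $w$, no vector in $B$ has first-$n$ part outside of $L_+$, and the loop terminates.
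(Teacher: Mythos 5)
Your proof is correct and follows essentially the same route as the paper: the identical invariant for correctness (the embedded vectors of short elements of $L_+$ have norm at most $\sqrt{n+2}\,M$, so by Lemma~\ref{lemma:gramSchmidt} and Masser's bound the removals lose nothing), and for termination the same combination of the finiteness of bounded integer vectors outside $L_+$ with the LLL inequality~\eqref{eq:lllProperty} and the doubling of $w$. The only difference is bookkeeping in the termination step: you bound all vectors surviving the pruning uniformly by $C=2^{(n-1)/2}\sqrt{n+2}\,M$ and derive a contradiction with that bound, whereas the paper lower-bounds the Gram--Schmidt norms $\norm{\bar b_k}$ to argue the offending vectors would already have been removed.
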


\begin{proof}
    It is clear that every output vector is an element of~$L_+$.
    To see that the output vectors generate~$L_+$, we need to justify the removals in line~\ref{alg:removeVectors}.
    By assumption, we know that $L_+$ has a basis whose elements have components bounded by~$M$.
    For every vector $(e_1,\dots,e_n)\in L_+$ with $\abs{e_i}<M$ for all $i$ we have
    \begin{align*}
        w\abs{\sum_{i=1}^n e_i\Re(\xi_i)}
        &=w\abs{\sum_{i=1}^n e_i\Re(\xi_i)-e_i\Re(x_i)} 
        \leq w\sum_{i=1}^n \abs{e_i}\abs{\Re(\xi_i)-\Re(x_i)} \\
        &\leq w\sum_{i=1}^n \abs{e_i}/(nw) < M
    \end{align*}
    and likewise for the imaginary parts.
    Therefore, we are only interested in vectors $b=(e_1,\dots,e_n,\ast,\ast)$
    in the lattice generated by $B$ with
    \[
      \norm{b} \leq \sqrt{M^2+\cdots+M^2+M^2+M^2}=\sqrt{n+2}M.
    \]
    By Lemma~\ref{lemma:gramSchmidt}, these vectors are still in the lattice after the removals in line~\ref{alg:removeVectors}.
    
    It remains to show that the algorithm terminates. Suppose it does not terminate, i.e., the set~$B$ eventually contains~$r$ vectors in every iteration which are not all in the lattice~$L_+$. We show that from some point on in the algorithm (i.e., for big enough~$w$), this cannot be the case because vectors which are not in~$L_+$ are too long and are therefore removed in line~\ref{alg:removeVectors} of the algorithm.
    
    There are only finitely many vectors $(e_1,\dots,e_n)\in\ZN^n$ with $\abs{e_i}\leq \sqrt{n+2}M$ for all~$i=1,\dots,n$.
    Therefore, there exists an~$\epsilon > 0$ such that
    \[
     \max(\abs{e_1\Re(x_1)+\cdots+e_n\Re(x_n)},\abs{e_1\Im(x_1)+\cdots+e_n\Im(x_n)})>\epsilon
    \]
    for all $(e_1,\dots,e_n)\in\ZN^n\setminus L_+$ with $\abs{e_i}\leq \sqrt{n+2}M$ for all~$i$.
    Choose such an~$\epsilon$. Suppose we are in line~\ref{alg:loopCondition} of the algorithm with $w \geq \tfrac{\sqrt{n+2}M (1+2^{(r-1)/2})}{\epsilon}$ and $b_j \in B \setminus L$ with $j \in \{1,\dots,r\}$. Let 
      \[ b_j=\left(e_1,\dots,e_n, w \sum_{i=1}^n e_i\Re(\xi_i), w \sum_{i=1}^n e_i\Im(\xi_i)\right). \] 
    Since $b_j$ has not been removed in line~\ref{alg:removeVectors} in the previous iteration, we have $\abs{e_i}\leq \sqrt{n+2}M$ for all $i=1,\dots,n$. By the choice of $\epsilon$ for either $f=\Re$ or $f=\Im$ we have
    \begin{align*}
      w\abs{\sum_{i=1}^n e_i f(\xi_i)}
      &=w\abs{\sum_{i=1}^n e_i (f(\xi_i) - f(x_i) + f(x_i))} 
      =w\abs{\sum_{i=1}^n e_i (f(\xi_i) - f(x_i)) + \sum_{i=1}^n e_if(x_i)}
      \\
      &\geq w \left(\abs{\sum_{i=1}^n e_if(x_i)}-\abs{\sum_{i=1}^n e_i (f(\xi_i) - f(x_i))}\right) 
      > w \left(\epsilon-\abs{\sum_{i=1}^n e_i (f(\xi_i) - f(x_i))}\right).
    \end{align*}
    Furthermore, we have 
    \begin{align*}
      \abs{\sum_{i=1}^n e_i (f(\xi_i) - f(x_i))} \leq \sum_{i=1}^n \abs{e_i} \abs{f(\xi_i) - f(x_i)} 
      < \sum_{i=1}^n \sqrt{n+2}M \tfrac{1}{nw} = \tfrac{\sqrt{n+2}M}{w}.
    \end{align*}
    Using this and the condition on~$w$ in the inequality above yields
    \begin{align*}
      w\abs{\sum_{i=1}^n e_i f(\xi_i)} &> we - \sqrt{n+2}M 
      \geq 2^{(r-1)/2} \sqrt{n+2}M.
    \end{align*}
    Therefore, $\norm{b_j} > 2^{(r-1)/2} \sqrt{n+2}M$. In particular, using~\eqref{eq:lllProperty},
    \begin{align*}
      \norm{\bar b_k} \geq 2^{-(k-1)/2}\norm{b_j} > 2^{(r-k)/2} \sqrt{n+2}M \geq \sqrt{n+2}M
    \end{align*}
    for all $k=j,\dots,r$. Hence, $b_j,\dots,b_r$ would have been removed already in the past iteration and cannot be in the set~$B$ anymore, a contradiction.    
\end{proof}

An implementation of the algorithm is part of the \texttt{rec\_sequences} package\footnote{The code is available at \url{https://github.com/PhilippNuspl/rec_sequences} in the \texttt{IntegerRelations} class.} and is publicly available \cite{nuspl22c}.


\section{Torsion number} \label{sec:torsionNumber}

For proving the order bounds for $C^2$-finite sequences, we will heavily rely on the fact that a $C$-finite sequence~$c$ can be written as interlacing of non-degenerate sequences~$c(dn),\dots,c(dn+d-1)$ \cite[Theorem~1.2]{everest15}. More generally, if we have a finitely generated difference algebra of $C$-finite sequences, we will determine a number~$d \in \NN$ (which we will call the \emph{torsion number}) such that every sequence in the algebra can be written as the interlacing of~$d$ non-degenerate subsequences. 

Let~$c_0,\dots,c_r \in \CFin$ with eigenvalues $\lambda_1,\dots,\lambda_m$ and let $R_d \coloneqq \diffalg{\KN}{c_0(dn),\dots,c_r(dn)}$ be the smallest difference algebra which contains the sequences $c_0(dn),\dots,c_r(dn)$. Suppose $c \in R_d$. Then, every eigenvalue $\lambda$ of~$c$ is of the form $\lambda=\lambda_1^{e_1}\cdots \lambda_m^{e_m}$ for some $e_1,\dots,e_m \in \NN$. We want to find a~$d$ such that every sequence $c \in R_d$ is non-degenerate. Equivalently, we want to find a~$d$ such that 
    \[ \left( \tfrac{\lambda_1^{d e_1}\cdots \lambda_m^{d e_m}}{\lambda_1^{d f_1}\cdots \lambda_m^{d f_m}} \right)^k = 1 \implies \lambda_1^{d e_1}\cdots \lambda_m^{d e_m}=\lambda_1^{d f_1}\cdots \lambda_m^{d f_m} \]
for all $k,e_1,\dots,e_m,f_1,\dots,f_m \in \NN$. In order to write this more concisely we define the multiplicative group $G \coloneqq \langle \lambda_1, \dots, \lambda_m \rangle \leq (\CN^\times, \cdot)$. Then, this condition reads as 
    \[  \forall k \in \NN_{\geq 1} \forall \lambda \in G \colon \lambda^{kd} = 1 \implies \lambda^d = 1.\]
The following lemma shows that this number~$d$ also has a purely group-theoretical and a purely lattice-theoretical description.

\begin{lem} \label{lemma:equivalencesTorisonNumber}
    Let $G \coloneqq \langle \lambda_1, \dots, \lambda_m \rangle \leq (\CN^\times, \cdot)$. The following conditions on~$d \in \NN_{\geq 1}$ are equivalent:
    \begin{enumerate}
        \item \label{enum:equivalencesTorisonNumber1} The number~$d$ satisfies 
            \[ \forall k \in \NN_{\geq 1} \forall \lambda \in G \colon \lambda^{kd} = 1 \implies \lambda^d = 1. \]
        \item \label{enum:equivalencesTorisonNumber2} Let $\torsion{G} \coloneqq \{ \lambda \in G \st \ord(\lambda) < \infty \}$ be the torsion subgroup of~$G$. Then, $d$ satisfies 
            \[ \ord(\lambda) \mid d \text{ for all } \lambda \in \torsion{G}. \]
        \item \label{enum:equivalencesTorisonNumber3} Let 
            \[ L \coloneqq L(\lambda_1,\dots,\lambda_m) \coloneqq \{ (e_1,\dots,e_m) \in \ZN^m \st \lambda_1^{e_1} \cdots \lambda_m^{e_m} = 1 \} \] 
        be the lattice of integer relations among $\lambda_1,\dots,\lambda_m$. Then, $d$ satisfies 
            \begin{align} \label{eq:torsionNumberLattice}
                \forall k \in \NN_{\geq 1} \, \forall v \in \ZN^m \colon k d v \in L \implies dv \in L.
            \end{align}
    \end{enumerate}
\end{lem}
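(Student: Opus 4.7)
My plan is to prove the equivalences by establishing (1) $\Leftrightarrow$ (2) and (1) $\Leftrightarrow$ (3) separately, since (1) sits naturally between a group-theoretic statement about $G$ and a lattice-theoretic statement about $L$. The key conceptual device is the surjective group homomorphism
\[ \pi \colon \ZN^m \to G, \qquad \pi(e_1,\dots,e_m) = \lambda_1^{e_1} \cdots \lambda_m^{e_m}, \]
whose kernel is exactly $L$ by definition. Under this map, membership $v \in L$ simply says $\pi(v) = 1$, so conditions (1) and (3) are two formulations of the same implication, one phrased on $G$ and one on $\ZN^m/L \cong G$.

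For (1) $\Leftrightarrow$ (2), the direction (1) $\Rightarrow$ (2) follows by taking $\lambda \in \torsion{G}$ and setting $k = \ord(\lambda)$: then $\lambda^k = 1$ gives $\lambda^{kd} = 1$, so by (1) we get $\lambda^d = 1$, i.e.\ $\ord(\lambda) \mid d$. Conversely, if $\lambda \in G$ satisfies $\lambda^{kd} = 1$ for some $k \geq 1$, then $\lambda$ has finite order and hence lies in $\torsion{G}$; by (2), $\ord(\lambda) \mid d$, so $\lambda^d = 1$, giving (1).

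For (1) $\Leftrightarrow$ (3), I will use $\pi$ to transport the statements. If (1) holds and $kdv \in L$, then $\pi(v)^{kd} = \pi(kdv) = 1$, so by (1) applied to $\pi(v)\in G$, we obtain $\pi(v)^d = 1$, i.e.\ $dv \in L$. Conversely, if (3) holds and $\lambda \in G$ satisfies $\lambda^{kd} = 1$, write $\lambda = \pi(v)$ for some $v \in \ZN^m$ (possible since $\pi$ is surjective); then $\pi(kdv) = \lambda^{kd} = 1$, so $kdv \in L$, and (3) yields $dv \in L$, i.e.\ $\lambda^d = \pi(dv) = 1$.

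I do not anticipate a main obstacle here: once one observes that $L = \ker \pi$ and that being torsion is equivalent to $\lambda^N = 1$ for some $N \geq 1$, all three conditions translate into each other by essentially one line each. The only care needed is in the direction (1) $\Rightarrow$ (2), where one must turn the equation $\lambda^k = 1$ into $\lambda^{kd}=1$ to feed it into (1), and in (3) $\Rightarrow$ (1), where one has to invoke surjectivity of $\pi$ rather than assume the element of $G$ comes with a chosen exponent vector.
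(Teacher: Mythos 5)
Your proof is correct and follows essentially the same argument as the paper: each implication is the same one-line unwinding of definitions, with your homomorphism $\pi$ playing the role of the paper's explicit exponent vectors $(e_1,\dots,e_m)$. The only difference is organizational — the paper proves the cycle (1)$\Rightarrow$(2)$\Rightarrow$(3)$\Rightarrow$(1) while you prove the two biconditionals (1)$\Leftrightarrow$(2) and (1)$\Leftrightarrow$(3) — which is immaterial.
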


\begin{proof}
    \ref{enum:equivalencesTorisonNumber1} $\implies$ \ref{enum:equivalencesTorisonNumber2}: Let $\lambda \in \torsion{G}$ and let $m \in \NN_{\geq 1}$ be minimal with $\lambda^m=1$. Then, clearly $\lambda^{md}=1$. By assumption, $\lambda^d=1$. As $m$ was chosen minimal, we have $m \mid d$.

    \ref{enum:equivalencesTorisonNumber2} $\implies$ \ref{enum:equivalencesTorisonNumber3}: Let $k \in \NN_{\geq 1}, v=(e_1,\dots,e_m) \in \ZN^m$ and $kdv \in L$. Let $\lambda=\lambda_1^{e_1} \cdots \lambda_m^{e_m}$. By definition of~$L$,
    \[ \lambda^{kd} = \lambda_1^{kd e_1} \cdots \lambda_m^{kd e_m} = 1. \]
    Hence, $\lambda \in \torsion{G}$. Therefore, by assumption, $\ord(\lambda) \mid d$, so $\lambda^d = 1$ and $dv \in L$.

    \ref{enum:equivalencesTorisonNumber3} $\implies$ \ref{enum:equivalencesTorisonNumber1}: Let $k \in \NN_{\geq 1}, \lambda = \lambda_1^{e_1}\cdots \lambda_m^{e_m} \in G$ and $\lambda^{kd} = 1$. Defining $v \coloneqq (e_1,\dots,e_m) \in \ZN^m$ yields $kd v \in L$. By assumption, $dv \in L$, i.e., $\lambda^d = 1$.
\end{proof}

Considering condition~\ref{enum:equivalencesTorisonNumber2} of Lemma~\ref{lemma:equivalencesTorisonNumber}, we can see that the smallest~$d$ which satisfies the condition is the exponent of the torsion group.

\begin{dfn}
    The \emph{torsion number}~$d \in \NN_{\geq 1}$ of $\lambda_1,\dots,\lambda_m \in \AN$ is defined as 
    \[ d \coloneqq \exp (\torsion{G}) \coloneqq \lcm(\ord(\lambda) \st \lambda \in \torsion{G}) \]
    where $G \coloneqq \langle \lambda_1, \dots, \lambda_m \rangle \leq (\CN^\times, \cdot)$. 
\end{dfn}

We also call~$d$ the torsion number of the lattice~$L$ if it is the smallest number satisfying~\eqref{eq:torsionNumberLattice}. Using the terminology of pure modules,~\eqref{eq:torsionNumberLattice}~is equivalent to $\{ v\in \ZN^m \st dv \in L \}$ being a pure lattice \cite[III.16]{curtis66}. 

\begin{lem} \label{lemma:divsibilityTorsionNumber}
    Let~$d$ be the torsion number of the lattice $L$. Then,~$d'$ satisfies the conditions from Lemma~\ref{lemma:equivalencesTorisonNumber} if and only if~$d \mid d'$.
\end{lem}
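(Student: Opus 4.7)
The plan is to reduce everything to condition~\ref{enum:equivalencesTorisonNumber2} of Lemma~\ref{lemma:equivalencesTorisonNumber}, which is the statement in the most directly arithmetic form: $\ord(\lambda) \mid d'$ for every $\lambda \in \torsion{G}$. Since $G = \langle \lambda_1,\dots,\lambda_m\rangle$ is a finitely generated abelian subgroup of $\CN\units$, its torsion subgroup $\torsion{G}$ is finite, so the $\lcm$ in the definition of the torsion number is a well-defined positive integer. Once we have both $d$ and $d'$ characterized via condition~\ref{enum:equivalencesTorisonNumber2}, the divisibility $d \mid d'$ follows from nothing more than the universal property of the $\lcm$.

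For the ``only if'' direction, I would assume $d'$ satisfies the conditions of Lemma~\ref{lemma:equivalencesTorisonNumber}, so in particular condition~\ref{enum:equivalencesTorisonNumber2} holds: $\ord(\lambda) \mid d'$ for all $\lambda \in \torsion{G}$. Since $d = \lcm(\ord(\lambda) : \lambda \in \torsion{G})$ is the least common multiple of exactly these numbers, the universal property of $\lcm$ gives $d \mid d'$.

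For the ``if'' direction, I would assume $d \mid d'$. For every $\lambda \in \torsion{G}$, the definition of $d$ as an $\lcm$ of orders gives $\ord(\lambda) \mid d$, and then transitivity yields $\ord(\lambda) \mid d \mid d'$. Hence $d'$ satisfies condition~\ref{enum:equivalencesTorisonNumber2}, and by Lemma~\ref{lemma:equivalencesTorisonNumber} it satisfies all three equivalent conditions.

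There is no real obstacle here; the only thing worth a comment is that $\torsion{G}$ is finite (so the $\lcm$ makes sense), which is a standard consequence of $G$ being a finitely generated abelian group. The entire statement is essentially a restatement of the fact that, in the lattice of positive integers under divisibility, the $\lcm$ of a finite set is the least element divisible by every member of the set.
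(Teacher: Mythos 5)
Your proof is correct, but it takes a slightly different route from the paper. You reduce both directions to condition~\ref{enum:equivalencesTorisonNumber2} of Lemma~\ref{lemma:equivalencesTorisonNumber} and then settle the forward direction by the universal property of the least common multiple, using the definition of~$d$ as $\lcm(\ord(\lambda) \st \lambda \in \torsion{G})$; the paper instead works from the characterization of~$d$ as the \emph{smallest} number satisfying the equivalent conditions and proves the forward direction by division with remainder, writing $d'=dq+r$, deducing $\ord(\lambda)\mid r$ for all $\lambda\in\torsion{G}$, and invoking minimality to force $r=0$ --- in effect re-deriving the lcm property by hand. The backward direction is the same trivial observation in both cases. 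Your version is cleaner and, as a bonus, it also re-proves the paper's preceding remark that the smallest number satisfying the conditions equals $\exp(\torsion{G})$; the paper's version has the small advantage of working directly with the ``smallest number satisfying~\eqref{eq:torsionNumberLattice}'' phrasing used to define the torsion number of the lattice~$L$, so it does not need to pass through the lcm definition at all. The only point you should make explicit (and you do) is that $\torsion{G}$ is finite, since $G$ is a finitely generated abelian group, so the $\lcm$ is well defined; with that, there is no gap.
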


\begin{proof}
    $\implies$: By definition,~$d$ is the smallest number satisfying the conditions from Lemma~\ref{lemma:equivalencesTorisonNumber}. Hence, $d'>d$ and we can write $d'=dq+r$ with $0 \leq r < d$. Let $G \coloneqq \langle \lambda_1, \dots, \lambda_m \rangle$ and $\lambda \in \torsion{G}$. Then, $\ord(\lambda) \mid d$ and $\ord(\lambda) \mid d'$. Hence, 
    \[ \ord(\lambda) \mid (d'-dq) = r. \]
    Since $d>r$ is the smallest number with this property we have $r=0$, so $d \mid d'$.

    $\Longleftarrow$: Clear from condition~\ref{enum:equivalencesTorisonNumber2} of Lemma~\ref{lemma:equivalencesTorisonNumber}.
\end{proof}

Now, we want to show that the torsion number of algebraic numbers~$\lambda_1,\dots,\lambda_m \in \AN$ can actually be computed. First, we have devised an algorithm in Section~\ref{sec:expLattice} which computes a basis~$v_1,\dots,v_\ell \in \ZN^m$ for the lattice $L \coloneqq L(\lambda_1,\dots,\lambda_m)$. Then, the invariant factor of the matrix built by the basis is precisely the torsion number of the lattice:

\begin{thm} \label{theorem:invariantFactor}
    Let $v_1,\dots,v_\ell \in \ZN^m$ be a basis of the lattice $L = \langle v_1, \dots, v_\ell \rangle \subseteq \ZN^m$. Let $V \coloneqq (v_1,\dots,v_\ell) \in \ZN^{m \times \ell}$. Then, the invariant factor of~$V$ is the torsion number of~$L$.
\end{thm}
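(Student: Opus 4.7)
The plan is to reduce everything to Smith normal form and then to inspect the torsion condition on a diagonal lattice. Writing $PVQ = D = \diag(d_1,\dots,d_\ell) \in \ZN^{m\times\ell}$ (with $d_1\mid d_2\mid\cdots\mid d_\ell$ all nonzero, since $V$ has full column rank as its columns are independent), the invariant factor of $V$ is $d_\ell$. I want to show that the smallest $d\in\NN_{\geq 1}$ satisfying \eqref{eq:torsionNumberLattice} is exactly $d_\ell$.

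First I would check that condition \eqref{eq:torsionNumberLattice} is invariant under unimodular transformations of the ambient $\ZN^m$. Since $P$ is unimodular, the map $v\mapsto Pv$ is a $\ZN$-module automorphism of $\ZN^m$, so $kdv \in L$ iff $kd(Pv)\in P(L)$, and the quantifier over $v\in\ZN^m$ is unchanged by this bijection. Applying $P$ maps $L$ to the column span of $PV$, and since $Q$ is unimodular the column span of $PV = DQ^{-1}$ equals the column span of $D$. Hence $P(L) = L' := \bigoplus_{i=1}^\ell d_i\ZN\, e_i$, and it suffices to prove the claim for $L'$. Membership in $L'$ is now completely explicit: $w\in L'$ iff $w_i=0$ for $i>\ell$ and $d_i\mid w_i$ for $i\leq \ell$.

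Then I would verify the two directions for $L'$. If $d_\ell\mid d$, then $d_i\mid d_\ell\mid d$ for every $i\leq\ell$, so for any $v\in\ZN^m$ with $v_{\ell+1}=\dots=v_m=0$ (the only $v$ for which $kdv\in L'$ is not automatically false) one has $d_i\mid dv_i$ for all $i\leq\ell$, hence $dv\in L'$; thus \eqref{eq:torsionNumberLattice} holds. Conversely, assume $d$ satisfies \eqref{eq:torsionNumberLattice}, take $v=e_\ell$ and $k=d_\ell$; then $kdv=d_\ell d\,e_\ell$ lies in $L'$, so the hypothesis forces $dv=d\,e_\ell\in L'$, which means $d_\ell\mid d$. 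Combining both directions, the set of admissible $d$ is exactly $d_\ell\ZN_{\geq 1}$, so the smallest one, i.e.\ the torsion number of $L$, equals $d_\ell$.

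I do not expect a serious obstacle here; the only point that deserves a moment of care is the reduction step, where one must observe both that $P$ being unimodular really does transport the condition and that $Q$ unimodular is what lets us replace $PV$ by $D$ when describing the column span. The rest is a direct unpacking of Smith normal form combined with the elementary divisibility arithmetic in the last paragraph.
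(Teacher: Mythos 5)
Your proof is correct, and while it rests on the same tool as the paper's argument --- the Smith normal form $PVQ=D$ --- the way you exploit it is genuinely different. The paper works in the original coordinates: it first shows that the invariant factor satisfies \eqref{eq:torsionNumberLattice} by writing $kdv=Vw$, multiplying by $P$, reading off divisibilities row by row to get $k\mid\gcd(w)$, and then proves minimality by a separate change-of-basis argument involving the columns of $P^{-1}$ and Lemma~\ref{lemma:divsibilityTorsionNumber}. You instead transport the condition itself through the unimodular automorphism $v\mapsto Pv$, noting that \eqref{eq:torsionNumberLattice} is invariant under this bijection and that $P(L)$ equals the column span of $D$ because $Q$ is unimodular; this reduces the whole statement to the diagonal lattice $\bigoplus_{i\le\ell} d_i\ZN e_i$, where both directions are one-line divisibility checks (the choice $v=e_\ell$, $k=d_\ell$ giving the lower bound). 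Your route is shorter and more conceptual, and it yields as a byproduct that the admissible $d$ are exactly the positive multiples of the invariant factor, i.e.\ it recovers the content of Lemma~\ref{lemma:divsibilityTorsionNumber} for free, whereas the paper's version stays closer to explicit matrix manipulations. The one hypothesis you quietly use --- that $\ZN$-linear independence of $v_1,\dots,v_\ell$ forces all $d_i\neq 0$ and $\ell\le m$, so that $d_\ell$ is indeed the invariant factor in the paper's sense --- you do state, and it is correct.
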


\begin{proof}
    We write 
    \[ PVQ=\begin{pmatrix} D \\ 0 \end{pmatrix}  \eqqcolon \overline{D} \in \ZN^{m \times \ell} \]
    where $D = \diag(d_1,\dots,d_{\ell-1},d)$ is the Smith normal form and $P \in \ZN^{m\times m},Q \in \ZN^{\ell \times \ell}$ are unimodular matrices.

    First, we show that the invariant factor~$d$ of the matrix~$V$ satisfies~\eqref{eq:torsionNumberLattice}: Let $k \in \NN_{\geq 1}$ and $v \in \ZN^m$ with $kdv \in L$. Then, there is a $w=(w_1,\dots,w_\ell) \in \ZN^\ell$ such that 
    \[ kdv = w_1 v_1 + \dots + w_\ell v_\ell = Vw. \]
    Therefore,
    \[ kdPv = PVw = PVQQ^{-1}w = \overline{D} \overline{w} \]
    with $\overline{w} = Q^{-1}w$. The $\ell$-th row yields $kd(Pv)_\ell=d \overline{w}_\ell$ where $(Pv)_\ell$ denotes the $\ell$-th entry of the vector $Pv$. Hence, $k \mid \overline{w}_\ell$. For the $i$-th row with $i<\ell$ we have $kd (Pv)_i=d_i \overline{w}_i$. By the property of the Smith normal form, we have $d_i \mid d$ and therefore $k \mid \overline{w}_i$. As~$Q$ is unimodular, \cite[Corollary 158]{middeke19} yields 
    \[ k \mid \gcd(\overline{w}) = \gcd(Q^{-1}w) = \gcd(w).\]
    Hence, $\tfrac{w}{k} \in \ZN^\ell$ and 
    \[ dv = \tfrac{1}{k} k dv = V \tfrac{w}{k} \in L. \]

    Secondly, we show that the invariant factor~$d$ is the smallest number: Suppose~$d'$ is the smallest number which satisfies~\eqref{eq:torsionNumberLattice}. By Lemma~\ref{lemma:divsibilityTorsionNumber} there is a~$k$ such that $d=d'k$. Let $P^{-1} = (p_1,\dots,p_m) \in \ZN^{m \times m}$. As the columns of~$V$ are a basis of $L$ and $Q$ is unimodular we have 
    \[ L = V\ZN^{\ell} = P^{-1} \overline{D} Q^{-1} \ZN^\ell = P^{-1} \overline{D} \ZN^\ell. \]
    Therefore, 
        \[ \{ d_1p_1,\dots,d_{\ell-1} p_{\ell-1},dp_\ell \} \] 
    is also a basis of~$L$. Let $v \coloneqq dp_\ell=d'kp_\ell \in L$. By assumption, $d' p_\ell \in L$. Hence, 
        \[ \{ d_1p_1,\dots,d_{\ell-1} p_{\ell-1},d' p_\ell \} \] 
    is a basis of~$L$ as well. Therefore, there is a unimodular change-of-basis matrix $U \in \ZN^{m \times m}$ with 
    \[ U(d_1p_1,\dots,d_{\ell-1} p_{\ell-1},dp_\ell) = (d_1p_1,\dots,d_{\ell-1} p_{\ell-1},d' p_\ell). \]
    In particular, the last column yields $U d p_\ell = d' p_\ell$. As~$U$ is unimodular, we have 
    \[ d \gcd(p_\ell) = \gcd(U d p_\ell) = \gcd(d' p_\ell) = d' \gcd(p_\ell). \]
    As $\gcd(p_\ell) \neq 0$, we have $d = d'$.
\end{proof}

\begin{ex}
	Let 
	\[ \lambda_1 = 2^{1/2}, \lambda_2 = (-2)^{1/3}, \lambda_3 = \i, \lambda_4 = -\i. \]
	The columns of
    \begin{align*}
        V \coloneqq \begin{pmatrix}
            0 & 0 & -2 \\
            0 & 0 & 3 \\
            1 & 2 & -1 \\
            1 & -2 & 1
        \end{pmatrix} 
        = P^{-1}
        \left(
        \begin{array}{ccc}
        1 & 0 & 0 \\
        0 & 1 & 0 \\
        0 & 0 & 4 \\
        0 & 0 & 0 \\
        \end{array}
        \right)
        Q^{-1}
    \end{align*}
	are a basis	of $L(\lambda_1,\lambda_2,\lambda_3,\lambda_4)$. Hence, $d=4$ is the torsion number of $\lambda_1,\dots,\lambda_4$. 
\end{ex}

Let~$c_0,\dots,c_r \in \CFin$ with eigenvalues $\lambda_1,\dots,\lambda_m$. Then, we have seen that we can compute a number~$d \in \NN_{\geq 1}$ (namely the torsion number) such that the algebra 
\[ R \coloneqq \diffalg{\KN}{c_0(dn),\dots,c_r(dn)} \]
only contains sequences which are non-degenerate, i.e., sequences which contain only finitely many zeros. A non-degenerate sequence might still be a zero divisor in the ring~$\KN^\NN$. However, we can still define the localization~$\fractions{R}\coloneqq \{\tfrac{c}{d} \st c \in R, d \in R \setminus \{ 0 \}\}$. This localization~$\fractions{R}$ is a field. An element of~$\fractions{R}$ can, however, only be interpreted as a sequence in~$\KN^\NN$ from some term on (cf. the discussion in Section~8.2 in~\cite{petkovsek96} or \cite{schneider20}). For instance, the sequence~$\tfrac{3^n}{2^n-1}$ cannot be evaluated at the term $n=0$. This is not a problem for our applications as we will see in Section~\ref{sec:orderBounds}. We summarize the discussions of the section in the following theorem:

\begin{thm} \label{theorem:field}
    Let~$c_0,\dots,c_r \in \CFin$ with eigenvalues $\lambda_1,\dots,\lambda_m$. Then, we can compute a number~$d \in \NN_{\geq 1}$ (namely the torsion number) such that the localization~$\fractions{R}$ of the algebra 
        \[ R \coloneqq \diffalg{\KN}{c_0(dn),\dots,c_r(dn)} \]
    is a field. The elements of the field~$\fractions{R}$ can be considered as sequences which are non-zero from some term on.
\end{thm}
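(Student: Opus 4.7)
The plan is to assemble the pieces developed earlier in this section. First I would invoke Algorithm~\ref{alg:basisLplus} (applied with $x_i=\log(\lambda_i)$ for $i\leq m$ and $x_{m+1}=2\pi\i$, with Masser's bound $M$) to obtain a basis $v_1,\dots,v_\ell \in \ZN^m$ of the exponent lattice $L=L(\lambda_1,\dots,\lambda_m)$, and then compute the Smith normal form of $V=(v_1,\dots,v_\ell) \in \ZN^{m\times \ell}$ to read off its invariant factor~$d$. By Theorem~\ref{theorem:invariantFactor}, this $d$ is precisely the torsion number of $L$, and hence by Lemma~\ref{lemma:equivalencesTorisonNumber} it satisfies condition~\ref{enum:equivalencesTorisonNumber1}.

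The heart of the argument is to show that every non-zero $c\in R$ is non-degenerate. I would trace eigenvalues through the algebra operations. By the formulas at the end of Section~\ref{subseq:cfin}, the eigenvalues of $c_i(dn)$ are $d$-th powers of eigenvalues of $c_i$, hence lie in $\{\lambda_1^d,\dots,\lambda_m^d\}$. Since sums contribute the union of eigenvalue sets, products contribute pairwise products, and shifts leave eigenvalues unchanged, a straightforward induction on the construction of $c$ from the generators shows that every eigenvalue of every $c\in R$ lies in $G^d \coloneqq \{g^d : g\in G\}$, where $G \coloneqq \langle \lambda_1,\dots,\lambda_m \rangle$. Suppose some non-zero $c\in R$ had two distinct eigenvalues $\alpha=g_1^d$ and $\beta=g_2^d$ with $\alpha/\beta$ a $k$-th root of unity. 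Then $(g_1/g_2)^{kd}=1$, and since $g_1/g_2 \in G$, condition~\ref{enum:equivalencesTorisonNumber1} of Lemma~\ref{lemma:equivalencesTorisonNumber} applied to $d$ forces $(g_1/g_2)^d=1$, i.e., $\alpha=\beta$, a contradiction.

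From here the conclusion follows quickly. By the Berstel-Everest theorem stated in Section~\ref{subseq:cfin}, every non-zero $c\in R$ has only finitely many zeros, so is eventually non-zero. Consequently, the product of any two non-zero elements of $R$ is again eventually non-zero and thus non-zero in $\KN^\NN$, so $R$ is an integral domain and its localization $\fractions{R}$ is a field. Any $c/e \in \fractions{R}$ with $e\neq 0$ can be evaluated on the cofinite set of indices at which $e(n)\neq 0$, justifying the final claim of the theorem. The main obstacle I expect is the eigenvalue-propagation bookkeeping in step two, but once the generators' eigenvalues are placed in $G^d$, the torsion number property of Lemma~\ref{lemma:equivalencesTorisonNumber} does the real work.
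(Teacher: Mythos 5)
Your proposal is correct and follows essentially the same route as the paper, which states Theorem~\ref{theorem:field} as a summary of the preceding discussion: compute the torsion number~$d$ via a lattice basis from Algorithm~\ref{alg:basisLplus} and the invariant factor (Theorem~\ref{theorem:invariantFactor}), observe that all eigenvalues of elements of~$R$ are $d$-th powers of elements of~$G$ so that condition~\ref{enum:equivalencesTorisonNumber1} of Lemma~\ref{lemma:equivalencesTorisonNumber} rules out degeneracy, and then apply the Berstel--Everest theorem to conclude every non-zero element of~$R$ is eventually non-zero, making $R$ an integral domain with field of fractions~$\fractions{R}$. Your explicit spelling-out of the eigenvalue propagation and the integral-domain step only fills in details the paper leaves implicit.
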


From the closed form of $C$-finite sequences it is clear that these sequences can be seen as special cases of sums of single nested product expressions. The torsion number can be used to find a certain algebraic independent basis of these sequences \cite{schneider20}. 


\section{Order bounds} \label{sec:orderBounds}

In this section we will derive order bounds for the ring operations and additional closure properties of $C^2$-finite sequences. 

The computation of closure properties of $C^2$-finite sequences can be reduced to solving linear systems of equations \cite{nuspl21,nuspl21b}. A $C^2$-finite recurrence 
    \[ x_0(n)  + x_1(n) \shift + \dots + x_s(n) \shift^s  \]
with $x_i \in R$ for some suitable ring $R$ of sequences is obtained by computing an element $(x_0,\dots,x_{s})$ in the kernel of a matrix 
    \begin{align} \label{eq:matrixAnsatz}
        \begin{pmatrix}
            w_0 , w_1 , \dots, w_s
        \end{pmatrix} \in \fractions{R}^{r \times (s+1)}.
    \end{align}
The $w_i$ can be computed iteratively using $w_{i+1} = M \shift(w_i)$ for a suitable matrix $M \in \fractions{R}^{r \times r}$ (where the shift operator $\shift$ is applied componentwise). 
\begin{itemize}
    \item In the case a recurrence for $a+b$ is computed, we use $w_0 = e_0 \oplus \tilde{e}_0$ and $M=M_a \oplus M_b$ where $M_a,M_b$ are the companion matrices of $a,b$ and $e_0,\tilde{e}_0$ are the first unit vectors of appropriate sizes.
    \item In the case a recurrence for $ab$ is computed, we use $w_0 = e_0 \otimes \tilde{e}_0$ and $M=M_a \otimes M_b$.
    \item In the case a recurrence for $a(\ell n)$ is computed, we use $w_0=e_0$ and $M=M_a(\ell n)\cdots M_a(\ell n+\ell-1)$.
    \item In the case a $C^2$-finite recurrence for $c(jn^2+kn+\ell)$ with $j,k,\ell \in \NN$ and a $C$-finite sequence~$c$ (which does not have $0$ as an eigenvalue) is computed, we use 
    \begin{align} \label{eq:constrSparseSubsequence}
        w_0 = M_c^{kn+\ell-r+1} e_{r-1} \text{ and } M=M_c^{j(2n+1)}
    \end{align}
    where $M_c$ is the companion matrix of $c$ and $e_{r-1}$ the last unit vector. 
\end{itemize}
The underlying ring $R$ is the difference algebra $\diffalg{\KN}{c_0,\dots,c_r}$ generated by the $C$-finite sequences $c_0,\dots,c_r$ appearing in~$w_0$ and~$M$.

\subsection{Interlacing and subsequence}

\begin{thm} \label{theorem:interlacing}
    Let $a_1(n),\dots,a_d(n)$ be $C^2$-finite sequences of maximal order~$r$. Let $b$ be the interlacing of these sequences. We can compute a $C^2$-finite recurrence of order at most~$dr$ for~$b$.
\end{thm}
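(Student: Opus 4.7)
The plan is direct: assemble the individual recurrences of $a_1,\dots,a_d$ into a single operator for $b$ whose coefficients are $C$-finite sequences obtained by interlacing, and whose shift structure automatically produces order $dr$. First, pad each given recurrence to order exactly $r$, giving
\[
\sum_{k=0}^{r} c_{s,k}(n)\,a_s(n+k)=0, \qquad s=1,\dots,d,
\]
with $c_{s,k}\in\CFin$ and $c_{s,r}(n)\neq 0$ for all $n\geq n_{0,s}$. The sequence $b$ is related to the $a_s$ by the identity $b(dq+s-1)=a_s(q)$ (matching the convention of closure property 4 of $\CFin$).

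Next, for each pair $(s,k)$ let $\tilde c_{s,k}$ be the interlacing of $d$ sequences whose $s$-th component is $c_{s,k}$ and whose remaining $d-1$ components are the zero sequence; by closure property 4 of $\CFin$ this is again $C$-finite. Explicitly,
\[
\tilde c_{s,k}(m)=\begin{cases} c_{s,k}(q) & \text{if } m=dq+(s-1),\\ 0 & \text{otherwise.}\end{cases}
\]
I claim that the operator
\[
\sum_{s=1}^{d}\sum_{k=0}^{r}\tilde c_{s,k}(m)\,\shift^{dk}
\]
annihilates $b$. Indeed, fix $m$ and let $s_0\in\{1,\dots,d\}$ be the unique index with $m\equiv s_0-1\pmod d$; writing $q=(m-s_0+1)/d$, all summands with $s\neq s_0$ vanish and the recurrence reduces to $\sum_k c_{s_0,k}(q)\,b(dq+s_0-1+dk)=\sum_k c_{s_0,k}(q)\,a_{s_0}(q+k)=0$, which holds by the recurrence for $a_{s_0}$.

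This operator has order $dr$, so it remains to verify that the leading coefficient $\sum_{s=1}^{d}\tilde c_{s,r}(m)$ has only finitely many zeros. On the residue class $m\equiv s_0-1\pmod d$ it equals $c_{s_0,r}((m-s_0+1)/d)$, which is nonzero as soon as $(m-s_0+1)/d\geq n_{0,s_0}$; taking $m\geq d\max_s n_{0,s}+d$ works uniformly. Hence the construction yields a valid $C^2$-finite recurrence in the sense of Definition~\ref{def:C2finite}. The only real subtlety is precisely this last check; by design it reduces immediately to the corresponding property of each $c_{s,r}$, and no exponent-lattice or torsion-number machinery from the previous sections is required for interlacing.
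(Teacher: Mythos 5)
Your proof is correct and follows essentially the same route as the paper: after normalizing all recurrences to order $r$ with eventually nonzero leading coefficient, you interlace the coefficient sequences (your sum $\sum_s \tilde c_{s,k}$ is exactly the interlacing of $c_{1,k},\dots,c_{d,k}$) and use shifts in multiples of $d$, so the operator you build coincides with the one in the paper's proof. The verification on each residue class and the finitely-many-zeros check for the leading coefficient match the paper's argument, with the latter made slightly more explicit.
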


\begin{proof}
    By shifting the recurrences of the $a_s$ appropriately, we can assume that they all satisfy a $C^2$-finite recurrence of order $r$ of the form
    \[ c_{s,0}(n) a_s(n) + \dots + c_{s,r}(n) a_s(n+r) = 0  \]
    for $s=1,\dots,d$ for $C$-finite sequences $c_{s,i}$ where the $c_{s,r}$ only have finitely many zeros. Let $e_{di}$ be the interlacing of $c_{1,i},\dots,c_{d,i}$ for $i=0,\dots,r$. These $e_{di}$ are then $C$-finite and $e_{dr}$ only has finitely many zeros. Then, $b$ satisfies the recurrence 
    \[ e_0(n) b(n) + e_d(n) b(n+d) + \dots + e_{dr}(n) b(n+dr) = 0. \]
\end{proof}

As seen in the proof of Theorem~\ref{theorem:interlacing}, computing the interlacing of $C^2$-finite sequences is simpler than in the case of $C$-finite and $D$-finite sequences. This is because the coefficients of the recurrence, namely $C$-finite sequences, are closed under interlacing themselves. 

\begin{ex} \label{ex:interlacing}
    Let $c$ be $C$-finite satisfying 
    \[ c(n)-c(n+r)=0, \quad c(0)=1,c(1)=\dots=c(r-1)=0. \]
    Furthermore, let~$a$ be the interlacing of~$c$ and $d-1$ times the zero sequence. Theorem~\ref{theorem:interlacing} shows that~$a$ is $C^2$-finite of order at most~$dr$. The sequence~$a$ is cyclic and has~$rd-1$ consecutive zeros. Hence, the sequence~$a$ also has to have order at least $rd$ as otherwise, the sequence would be constantly zero. The bound in Theorem~\ref*{theorem:interlacing} is therefore tight in general.
\end{ex}

\begin{lem} \label{lemma:subsequenceD}
    Let $a$ be $C^2$-finite of order $r$ and let~$d$ be the torsion number of the eigenvalues appearing in the recurrence of~$a$. Let $\ell \in \NN$. We can compute a $C^2$-finite recurrence of order at most $r$ which is satisfied by all sequences $a(d \ell n+i)$ for $i=0,\dots,d \ell -1$.
\end{lem}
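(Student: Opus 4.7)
My plan is to specialize the companion-matrix construction given in the bullet list just before the lemma (the one for $a(\ell n)$) to the step size $d\ell$, and then exploit Theorem~\ref{theorem:field} to do the linear algebra over a field. Writing the recurrence for $a$ as $\sum_{j=0}^r c_j(n) a(n+j) = 0$ and setting $R \coloneqq \diffalg{\KN}{c_0,\dots,c_r}$, the companion matrix $M_a \in \fractions{R}^{r \times r}$ satisfies $v(n+1) = v(n) M_a(n)$ with state $v(n) = (a(n),\dots,a(n+r-1))$. Iterating $d\ell$ times yields $N(n) \coloneqq M_a(n) M_a(n+1) \cdots M_a(n+d\ell-1)$ and $v(n+d\ell) = v(n) N(n)$.

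For fixed $i \in \{0,\dots,d\ell-1\}$, set $V_i(n) \coloneqq v(d\ell n + i)$, so that $V_i(n+1) = V_i(n) \cdot N(d\ell n + i)$ and $b_i(n) \coloneqq a(d\ell n + i)$ is the first entry of $V_i(n)$. Following the iterative recipe from the paragraphs preceding the lemma, I would build vectors $w_k(n)$ with $w_0 = e_0$ and $w_{k+1} = N(d\ell n + i)\cdot \shift(w_k)$, which unroll to $w_k(n) = N(d\ell n + i) N(d\ell(n+1) + i) \cdots N(d\ell(n+k-1) + i) \cdot e_0$ and give $b_i(n+k) = V_i(n) \cdot w_k(n)$. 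All entries of these matrices are rational in the subsequences $c_j(d\ell n + s)$ for $0 \le j \le r$, $0 \le s < d\ell$ and in their shifts, so the $w_k$ live in $\fractions{R'}^r$ where $R' \coloneqq \diffalg{\KN}{c_j(d\ell n + s) : 0 \le j \le r,\, 0 \le s < d\ell}$.

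The crux is then a dimension argument: since $d \mid d\ell$ and $d$ is the torsion number of the eigenvalues of $c_0,\dots,c_r$, Theorem~\ref{theorem:field} (more precisely, the divisibility criterion in Lemma~\ref{lemma:divsibilityTorsionNumber}) guarantees that $\fractions{R'}$ is a field. Hence the $r+1$ vectors $w_0, \dots, w_r$ in the $r$-dimensional $\fractions{R'}$-space must be linearly dependent, and any nontrivial relation $\sum_{k=0}^r \alpha_k(n) w_k(n) = 0$ yields $\sum_{k=0}^r \alpha_k(n) b_i(n+k) = V_i(n)\cdot 0 = 0$; clearing denominators over $R'$ produces a $C^2$-finite recurrence of order at most $r$ for $b_i$.

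The main obstacle — and the reason for introducing the torsion number in the first place — is ensuring that $\fractions{R'}$ really is a field: otherwise the subsequences $c_j(d\ell n + s)$ could be degenerate $C$-finite zero divisors, in which case the linear-algebra step of extracting coefficients $\alpha_k$ from a rank deficiency would not be legitimate and the dimension count would collapse. The construction runs in parallel for every $i \in \{0,\dots,d\ell-1\}$ over the common field $\fractions{R'}$, with only the starting offset in $N$ changing with $i$, so a recurrence of the claimed order is produced uniformly across all $d\ell$ subsequences.
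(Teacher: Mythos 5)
Your construction follows the paper's own route: step-$d\ell$ products of companion matrices, coefficients landing in the difference algebra $R'$ generated by the subsequences $c_j(d\ell n+s)$, the field property of $\fractions{R'}$ coming from the torsion number (Theorem~\ref{theorem:field} together with Lemma~\ref{lemma:divsibilityTorsionNumber}, since $d\mid d\ell$), and finally a nontrivial kernel element of an underdetermined $r\times(r+1)$ system over that field, with denominators cleared. One small point you should make explicit: after clearing denominators, take $t$ maximal with $\alpha_t\neq 0$ and observe that $\alpha_t$, being a nonzero element of $R'$, is non-degenerate and hence has only finitely many zeros; this is what Definition~\ref{def:C2finite} demands of the leading coefficient and is the second place where the choice of $d$ is used (the paper states this step explicitly).

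The genuine divergence is in the words ``satisfied by all sequences $a(d\ell n+i)$''. Your update matrix $N(d\ell n+i)$ depends on the offset $i$, so you obtain a possibly different kernel element, hence a possibly different recurrence, for each $i$; your closing claim that a recurrence is produced ``uniformly across all $d\ell$ subsequences'' does not yield the single common recurrence the lemma literally asserts. The paper instead runs only the offset-$0$ system with $M=M_a(d\ell n)\cdots M_a(d\ell n+d\ell-1)$ and opens its proof by asserting that the shifts of $a$ all satisfy the same recurrence, which is how the ``all $i$'' phrasing is justified; for non-constant coefficients that assertion fails, and a common recurrence of order at most $r$ need not even exist: for $a(n+1)=2^n a(n)$ (so $r=1$, $d=1$) and $\ell=2$, the subsequences $a(2n)$ and $a(2n+1)$ satisfy $y(n+1)=2^{4n+1}y(n)$ and $y(n+1)=2^{4n+3}y(n)$, respectively, and share no recurrence of order one. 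So your per-offset formulation is the defensible one, and it is all that is needed downstream (Theorem~\ref{theorem:interlacing} and Theorems~\ref{thm:subsequence} and~\ref{thm:boundsRing} only require, for each $i$, some recurrence of order at most $r$ over the common coefficient ring); state your conclusion in that per-$i$ form rather than suggesting a single recurrence valid for all offsets.
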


\begin{proof}
    The sequences $a(n+i)$ for $i=0,\dots,d-1$ all satisfy the same recurrence. By the choice of $d$, all sequences in the ring $R$ generated by the sequences appearing in 
    \[ M=M_a(d\ell n)\cdots M_a(d\ell n+d\ell-1) \] 
    are non-degenerate. By Theorem~\ref{theorem:field}, $\fractions{R}$ is a field. Therefore, if $s=r$, then the linear system~\eqref{eq:matrixAnsatz} is underdetermined and we can compute an element (after clearing denominators) $(x_0,\dots,x_r) \in R^{r+1}$ in the kernel with $x_t \neq 0$ and $x_{t+1}=\dots=x_r=0$ for some $t \leq r$. This gives rise to a $C^2$-finite recurrence 
    \[ x_0(n) + x_1(n) \shift + \dots + x_t(n) \shift^t \]
    as $x_t$ only has finitely many zeros by the choice of~$d$.
\end{proof}

To extend Lemma~\ref{lemma:subsequenceD} to subsequences at arbitrary arithmetic progressions we write such an arbitrary subsequence as the interlacing of certain subsequences for which Lemma~\ref{lemma:subsequenceD} can be applied.

\begin{thm} \label{thm:subsequence}
    Let $a$ be $C^2$-finite of order $r$ and let~$d$ be the torsion number of the eigenvalues appearing in the recurrence of~$a$. Let $\ell \in \NN$. We can compute a $C^2$-finite recurrence of order at most $dr$ which is satisfied by the sequence $a(\ell n)$.
\end{thm}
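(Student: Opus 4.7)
The plan is to reduce the general subsequence $a(\ell n)$ to a case covered by Lemma~\ref{lemma:subsequenceD} by passing through an interlacing. Define $b(n) \coloneqq a(\ell n)$. The key observation is that $b$ is the interlacing of the $d$ sequences $b(dn+j) = a(\ell(dn+j)) = a(d\ell n + j\ell)$ for $j=0,\dots,d-1$, each of which is a subsequence of $a$ at an arithmetic progression of step $d\ell$ shifted by an integer in $\{0,\dots,d\ell-1\}$ (since $0 \leq j\ell < d\ell$).

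First I would invoke Lemma~\ref{lemma:subsequenceD} with the given $d$ (the torsion number associated to the recurrence of $a$) and the same $\ell$: it tells us that \emph{all} of the sequences $a(d\ell n+i)$ for $i=0,\dots,d\ell-1$ satisfy a common $C^2$-finite recurrence of order at most $r$. In particular, the $d$ sequences $a(d\ell n + j\ell)$ for $j=0,\dots,d-1$ are covered and share such a recurrence of order at most $r$.

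Next I would apply Theorem~\ref{theorem:interlacing} to these $d$ sequences of maximal order $r$: the interlacing, which equals $b=a(\ell n)$, is $C^2$-finite of order at most $dr$, and the proof of that theorem is constructive, so we can in fact compute the recurrence by taking interlacings of the $C$-finite coefficient sequences as described there. This gives exactly the claimed bound of $dr$.

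There is no real obstacle beyond assembling the two results in the correct order; the only thing worth a sentence of care is checking that the range of shift indices $j\ell$ for $j=0,\dots,d-1$ lies in $\{0,\dots,d\ell-1\}$ so that Lemma~\ref{lemma:subsequenceD} genuinely applies, and that the common leading coefficient produced by that lemma (non-vanishing from some index on by the choice of $d$ via Theorem~\ref{theorem:field}) survives the interlacing step so that the resulting recurrence for $b$ is indeed a valid $C^2$-finite recurrence in the sense of Definition~\ref{def:C2finite}.
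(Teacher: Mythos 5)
Your proposal is correct and follows essentially the same route as the paper: apply Lemma~\ref{lemma:subsequenceD} to obtain a common recurrence of order at most $r$ for the sequences $a(d\ell n + j\ell)$, $j=0,\dots,d-1$, note that their interlacing is exactly $a(\ell n)$, and conclude with Theorem~\ref{theorem:interlacing}. The only cosmetic difference is that you start from $b(n)=a(\ell n)$ and decompose it, while the paper builds the interlacing first and then verifies it equals $a(\ell n)$; the content is identical.
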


\begin{proof}
    By Lemma~\ref{lemma:subsequenceD} we can compute a recurrence of order at most $r$ satisfied by $a(d \ell n+i)$ for $i=0,\dots,d \ell-1$. Let $b$ be the interlacing of the~$d$ sequences 
    \[ a(d\ell n),a(d \ell n+\ell),\dots,a(d\ell n+(d-1)\ell). \]
    By Theorem~\ref{theorem:interlacing}, $b$ has order at most $dr$. We show that $b(n)=a(\ell n)$: Let $n=qd+s$ with $0\leq s < d$. Then, by the definition of $b$
    \[ b(n) = b(qd+s) = a(d\ell q+sl) = a(\ell (dq+s)) = a(\ell n). \] 
\end{proof}

\subsection{Ring operations}

\begin{thm} \label{thm:boundsRing}
    Let $a,b$ be $C^2$-finite of order $r_1,r_2$, respectively and let~$d$ be the torsion number of the eigenvalues appearing in the recurrences of~$a,b$. Then,
    \begin{enumerate}
        \item the sequence $a+b$ is $C^2$-finite of order at most $d(r_1+r_2)$ and
        \item the sequence $ab$ is $C^2$-finite of order at most $d r_1 r_2$.
    \end{enumerate}
    Furthermore, such recurrences can be computed.
\end{thm}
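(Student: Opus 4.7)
The plan is to mirror the strategy of Lemma~\ref{lemma:subsequenceD} and Theorem~\ref{thm:subsequence}: first pass to the $d$-slices $a(dn+i), b(dn+i)$ for $i = 0, \dots, d-1$, execute the ansatz from the beginning of Section~\ref{sec:orderBounds} in the fraction field guaranteed by Theorem~\ref{theorem:field}, and finally interlace the $d$ resulting subsequence-recurrences back together using Theorem~\ref{theorem:interlacing}.

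Concretely, by Lemma~\ref{lemma:subsequenceD} applied with $\ell = 1$, there is a single $C^2$-finite recurrence of order at most $r_1$ simultaneously satisfied by all slices $a(dn+i)$, and an analogous recurrence of order at most $r_2$ for $b(dn+i)$. Let $\tilde{M}_a$ and $\tilde{M}_b$ be the associated companion matrices, of sizes $r_1$ and $r_2$, whose entries are rational expressions in the $C$-finite sequences produced from the original recurrences of $a$ and $b$ after the substitution $n \mapsto dn$. Since $d$ is the torsion number of the union of eigenvalues of the recurrences of $a$ and $b$, Theorem~\ref{theorem:field} guarantees that the difference algebra $R$ generated by all $C$-finite sequences appearing in $\tilde{M}_a$ and $\tilde{M}_b$ has a field as its localization $\fractions{R}$, and that every nonzero element of $R$ is non-degenerate.

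For the sum, apply the ansatz with $w_0 = e_0 \oplus \tilde{e}_0 \in \fractions{R}^{r_1 + r_2}$ and $M = \tilde{M}_a \oplus \tilde{M}_b$, iterating $w_{k+1} = M \shift(w_k)$. The matrix $(w_0, w_1, \dots, w_{r_1 + r_2})$ then has $r_1 + r_2$ rows and $r_1 + r_2 + 1$ columns over the field $\fractions{R}$, so its kernel contains a nonzero element $(x_0, \dots, x_{r_1 + r_2})$. Clearing denominators and truncating at the largest index $t$ with $x_t \neq 0$ produces a $C^2$-finite recurrence of order $t \leq r_1 + r_2$ simultaneously satisfied by each slice $a(dn+i) + b(dn+i)$. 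Interlacing these $d$ slice sequences via Theorem~\ref{theorem:interlacing} recovers a $C^2$-finite recurrence of order at most $d(r_1 + r_2)$ for $a + b$. The product case is entirely analogous with $w_0 = e_0 \otimes \tilde{e}_0$ and $M = \tilde{M}_a \otimes \tilde{M}_b$ of size $r_1 r_2$, yielding the bound $d r_1 r_2$.

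The key technical point to verify is that the leading coefficient $x_t$ of the slice-recurrence has only finitely many zeros, so that Definition~\ref{def:C2finite} genuinely applies. This is precisely what the choice of $d$ buys us: $x_t \in R$, and by Theorem~\ref{theorem:field} every nonzero element of $R$ is a non-degenerate $C$-finite sequence, hence has only finitely many zeros by the Berstel--Everest theorem of Section~\ref{subseq:cfin}.
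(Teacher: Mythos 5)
Your proposal is correct and follows essentially the same route as the paper: pass to the $d$-slices via Lemma~\ref{lemma:subsequenceD}, solve the linear system for the sum/product ansatz over the field $\fractions{R}$ from Theorem~\ref{theorem:field} to get slice recurrences of order at most $r_1+r_2$ resp.\ $r_1r_2$ with non-degenerate leading coefficient, and interlace via Theorem~\ref{theorem:interlacing}. In fact you spell out the kernel computation and the finitely-many-zeros argument for the leading coefficient in more detail than the paper's own proof does.
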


\begin{proof}
    We can compute $C^2$-finite recurrences of maximal order $r_1,r_2$ for $a(dn+i),b(dn+i)$ by Lemma~\ref{lemma:subsequenceD}. The closure properties $a(dn+i)+b(dn+i)$ and $a(dn+i) b(dn+i)$ can be computed again by solving a linear system of equations over the field $\fractions{R}$. Then, the same order bounds as in the $C$-finite and $D$-finite case apply, so the sequences $a(dn+i)+b(dn+i), a(dn+i) b(dn+i)$ have maximal orders $r_1+r_2,r_1 r_2$, respectively. By Theorem~\ref{theorem:interlacing}, we can interlace these sequence and obtain a recurrence of order $d(r_1+r_2),dr_1r_2$ for $a+b$ and $ab$, respectively.
\end{proof}

In the special case that both $C^2$-finite sequences are $C$-finite or $D$-finite, the torsion number is~$1$ and the bounds simplify to the known order bounds for these rings.

Theorem~\ref{thm:boundsRing} does not imply that the ring of $C^2$-finite sequences is computable. We can compute $C^2$-finite recurrences for the sum and the product. These recurrences, however, have leading coefficients which can have finitely many zeros. To uniquely determine the sequences $a+b,ab$ we might need to define additional initial values at these singularities. However, by the Skolem problem, we do not know whether these singularities can be computed. This is also illustrated in the next example.

\begin{ex} \label{ex:singularity}
    Let $a(n)=2^{\binom{n+1}{2}}$ (\href{http://oeis.org/A006125}{A006125} in the OEIS \cite{oeis}) and $b(n)=4^{\binom{n}{2}}$ (\href{http://oeis.org/A053763}{A053763}). Both sequences are $C^2$-finite satisfying the recurrences 
    \[ 2^{n+1} \, a(n) - a(n+1) = 0, \quad 4^n \, b(n) - b(n+1) = 0. \]
    The torsion number of $L(1,2,4)$ is~$d=1$. The coefficients for a recurrence of $c=a+b$ are given by an element in the kernel of 
    \[ \begin{pmatrix}
        1 & 2^{n+1} & 2^{2n+3} \\
        1 & 2^{2n} & 2^{4n+2}
    \end{pmatrix}. \]
    A recurrence is therefore, for instance, given by 
    \[ 2^{3n+3}(2^n-1) c(n) - 2^{n+2}(2^{2n}-2) c(n+1) + (2^n-2) c(n+2) = 0. \]
    The recurrence has order $\ord(a)+\ord(b)=2$ as expected but the leading coefficient has a zero term at $n=1$. Shifting the recurrence yields a recurrence of higher order with a leading coefficient which does not have any zero terms anymore.
\end{ex}

\begin{ex}
    Let $c$ be $C$-finite of order~$2$ satisfying 
    \[ c(n) - c(n+2) = 0,\quad c(0)=-1,c(1)=1. \]
    Let $a,b$ be $C^2$-finite satisfying 
    \[ a(n)=1 \quad c(n) b(n)-b(n+1) = 0, \quad b(0)=1. \]
    The eigenvalues that appear are $1$ and $-1$. The torsion number is therefore $d=2$. Let $a_i(n)=a(2n+i)$ and $b_i(n)=b(2n+i)$ for $i=0,1$. These are even $C$-finite of order~$1$ satisfying 
    \[ a_i(n)-a_i(n+1)=0, \quad b_i(n)+b_i(n+1)=0. \] Let $s_i=a_i+b_i$. These $s_i$ are $C$-finite of order~$2$ satisfying 
    \[ s_i(n)-s_i(n+2) = 0.\]
    The interlacing $s=a+b$ of $s_0,s_1$ satisfies the $C$-finite recurrence of order $4=d (\ord(a)+\ord(b))$
    \[ s(n)-s(n+4)=0. \]
    However, $s$ also satisfies a $C^2$-finite recurrence of order $3$, namely
    \[ c_0(n) s(n) + c_2(n) s(n+2) + s(n+3)=0\] 
    with 
    \begin{align*}
        c_0(n)-c_0(n+2)&=0, & c_0(0)&=-1, & c_0(1)&=0, \\
        c_2(n)-c_2(n+2)&=0, & c_2(0)&=0, & c_2(1)&=-1.
    \end{align*}
    There cannot be a shorter recurrence for $s(n)$ as it contains $2$ consecutive zeros.
\end{ex}

\subsection{Sparse subsequences}

\begin{thm} \label{thm:boundsSparseCFin}
    Let $c$ be $C$-finite of order $r$ and $\lambda_1,\dots,\lambda_m$ its eigenvalues and $\lambda_i \neq 0$ for all $i=1,\dots,m$. Let~$d$ be the torsion number of the eigenvalues. Then, we can compute a $C^2$-finite recurrence of
    \[ c(j n^2+kn+\ell) \]
    of maximal order $dr$ for all $j,k,\ell\in \NN$.
\end{thm}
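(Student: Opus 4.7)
My strategy mirrors the proof of Theorem~\ref{thm:subsequence}. Writing $f(n) \coloneqq c(jn^2+kn+\ell)$, I would first split $f$ into the $d$ arithmetic-progression subsequences $h_t(n) \coloneqq f(dn+t)$ for $t=0,\dots,d-1$, construct a $C^2$-finite recurrence of order at most $r$ for each $h_t$, and then interlace them via Theorem~\ref{theorem:interlacing} to obtain a recurrence of order at most $dr$ for $f$ itself.

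For each fixed $t$, the sequence $h_t$ evaluates $c$ at the quadratic $\phi_t(n) = j_t n^2 + k_t n + \ell_t$ with $j_t = jd^2$, $k_t = d(2jt+k)$, $\ell_t = jt^2+kt+\ell$; crucially, both $j_t$ and $k_t$ are divisible by $d$. I would use the ansatz from~\eqref{eq:constrSparseSubsequence}: set $w_0^{(t)}(n) \coloneqq M_c^{k_t n + \ell_t - r + 1} e_{r-1}$ and $M^{(t)}(n) \coloneqq M_c^{j_t(2n+1)}$, and iterate $w_{i+1}^{(t)} = M^{(t)} \shift(w_i^{(t)})$. A short induction yields
\[ w_i^{(t)}(n) = M_c^{(2 j_t\, i + k_t)\, n \,+\, (j_t i^2 + k_t i + \ell_t - r + 1)} e_{r-1}, \]
and with $v^{(t)}(n) \coloneqq (c(0),\dots,c(r-1))\, M_c^{j_t n^2}$ one checks directly that $h_t(n+i) = v^{(t)}(n)\, w_i^{(t)}(n)$ for $i=0,\dots,r$. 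The hypothesis that $0$ is not an eigenvalue of $c$ makes $M_c$ invertible, so negative exponents cause no trouble.

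The key observation is that every exponent occurring in the entries of $w_i^{(t)}$ and of $M^{(t)}$ is a multiple of $d$: indeed $2j_t i + k_t = d(2jd\, i + 2jt + k)$ and $2j_t = 2jd^2$. Hence each such entry is a $C$-finite sequence in $n$ whose eigenvalues lie in the subgroup $\langle \lambda_1^d, \dots, \lambda_m^d\rangle \le G$. By the defining property of the torsion number, this subgroup is torsion-free, so any $C$-finite sequence whose eigenvalues lie inside it is automatically non-degenerate. Consequently the $\KN$-algebra $R_t$ generated by the entries of the $w_i^{(t)}$ and $M^{(t)}$ consists entirely of non-degenerate sequences, and Theorem~\ref{theorem:field} then shows that $\fractions{R_t}$ is a field.

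The matrix $(w_0^{(t)} \mid w_1^{(t)} \mid \cdots \mid w_r^{(t)}) \in \fractions{R_t}^{r\times(r+1)}$ therefore has a non-trivial kernel, and clearing denominators yields $(x_0,\dots,x_r) \in R_t^{r+1}$ with last non-zero entry $x_{r'}$, $r' \leq r$; by the non-degeneracy of $R_t$, this $x_{r'}$ has only finitely many zeros, so $\sum_{i=0}^{r'} x_i(n)\, h_t(n+i) = 0$ is a valid $C^2$-finite recurrence of order at most $r$ for $h_t$. Applying Theorem~\ref{theorem:interlacing} to the $d$ sequences $h_0,\dots,h_{d-1}$ then produces a $C^2$-finite recurrence of order at most $dr$ for $f$. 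The main conceptual hurdle is the eigenvalue bookkeeping that confines everything to the torsion-free subgroup $\langle \lambda_1^d, \dots, \lambda_m^d\rangle$: attempting the ansatz on $f$ directly would give eigenvalues of the form $\lambda_q^{2ji+k}$, which can be genuinely degenerate and destroy the field property that makes the linear system solvable.
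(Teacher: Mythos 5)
Your proposal is correct and follows essentially the same route as the paper's proof: the paper likewise splits $c(jn^2+kn+\ell)$ into the $d$ subsequences $c\bigl(d(djn^2+(2jt+k)n)+jt^2+kt+\ell\bigr)$, applies the ansatz~\eqref{eq:constrSparseSubsequence} whose generating sequences have the characteristic polynomials of $M_c^{dk_t}$ and $M_c^{2dj_t}$ (hence eigenvalues that are $d$-th powers, so the localized ring is a field by Theorem~\ref{theorem:field}), solves the underdetermined system to get order $r$ for each piece, and interlaces via Theorem~\ref{theorem:interlacing}. Your explicit formula for $w_i^{(t)}$ and the torsion-free subgroup argument just spell out details the paper delegates to Lemma~11 of~\cite{kotek14} and Section~\ref{sec:torsionNumber}.
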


\begin{proof}
    In a first step, we show how we can find a recurrence of order $r$ for the sequence 
    \[ a(n)=c(d(jn^2+kn)+\ell). \]
    Lemma~11 in \cite{kotek14} shows that $M^{pn+q}$ for $p,q \in \ZN$ is a matrix of $C$-finite sequences. The proof shows that the characteristic polynomials of the sequences is the characteristic polynomial of~$M^p$. Let $M_c$ be the companion matrix of~$c$. Suppose 
    \[ (x-\lambda_1)^{d_1} \cdots (x-\lambda_m)^{d_m} \]
    is the characteristic polynomial of $c$ which, by definition of the companion matrix, is also equal to the characteristic polynomial of~$M_c$. Then, by the closed form of $C$-finite sequences, the characteristic polynomial of $c(pn)$ is given by 
    \[ (x-\lambda_1^p)^{d_1} \cdots (x-\lambda_m^p)^{d_m} \]
    which, in turn, is equal to the characteristic polynomial of $M_c^p$. By~\eqref{eq:constrSparseSubsequence}, the sequences that generate the underlying ring $R$ used for computing a recurrence for $a(n)$ all have characteristic polynomial equal to the characteristic polynomials of $M_c^{dk}$ and $M_c^{2dj}$. An element in the kernel of the linear system over the field~$\fractions{R}$ can easily be computed if $s=r$. This gives rise to a $C^2$-finite recurrence of order $r$ for $a$.

    An arbitrary sequence 
    \[ b(n) = c(jn^2+kn+\ell) \]
    can be written as interlacing of sequences 
    \[ a_r(n) = c(d(djn^2+(2jr+k)n)+jr^2+kr+\ell) \]
    for $r=0,\dots,d-1$ as the term at index $n=qd+r$ of the interlacing is precisely given by 
    \begin{align*}
        a_r(q) &= c(d(djq^2+(2jr+k)q)+jr^2+kr+\ell) \\
        &= c(j(d^2q^2+2rq+r^2)+k(dq+r)+\ell) = c(jn^2+kn+\ell).
    \end{align*}
    We can compute $C^2$-finite recurrences of order~$r$ for these sequences $a_r$ by the first part of the proof (choosing $j=dj,k=2jr+k,\ell=jr^2+kr+\ell$). By Theorem~\ref{theorem:interlacing} we can therefore compute a $C^2$-finite recurrence of order $dr$ for $b$.
\end{proof}

\begin{ex}
    Let $c$ be the $C$-finite sequence (\href{http://oeis.org/A006131}{A006131} in the OEIS) satisfying 
    \[ 4\, c(n)+c(n+1)-c(n+2)=0, \quad c(0)=c(1)=1. \]
    The sequence has eigenvalues~$\tfrac{1\pm \sqrt{17}}{2}$ and their torsion number is~$1$. The sparse subsequence~$a(n)=c(n^2)$ is $C^2$-finite of order~$2$ satisfying 
    \[ c_0(n) a(n) - c(4n+3) a(n+1) + c(2n) a(n+2)=0 \]
    where $c_0$ is $C$-finite of order~$2$ satisfying
    \[ 4096 \, c_0(n) -144 \, c_0(n+1) + c_0(n+2) = 0, \quad c_0(0)=-20, c_0(1)=-1856. \]
\end{ex}


\section{Outlook}
Recently, the class of \emph{simple $C^2$-finite sequences} has been
introduced~\cite{nuspl22b} that satisfies the same computational properties as
$C^2$-finite sequences, but does not share the same technical issues. In
particular, it is possible to derive bounds for the asymptotic behavior, there
is a characterization through the generating function and closure properties can
be computed more efficiently.

It is, however, not clear whether it is possible to derive order bounds for
simple $C^2$-finite sequences as we have presented here for $C^2$-finite
sequence. In that case, one is dealing with an inhomogeneous linear system and
the underlying ring is not a principal ideal domain. Hence, one cannot simply
bound the rank of modules.

Typically, given a defining recurrence for a $C^2$-finite sequence, it is
difficult to argue that it does not satisfy a shorter recurrence. For $D$-finite
sequences, it is a common strategy to use Guess-and-prove to derive a shorter
recurrence (or to find evidence that it is holonomic in the first place). It
would be desirable to have a guessing routine for $C^2$-finite sequences. As a
naive approach leads to a non-linear system (see also \cite{thanatipanonda20}),
it needs to be investigated how this can be solved efficiently.


\subsubsection*{Acknowledgment} 

We warmly thank Thierry Combot and Carsten Schneider for their advice.

\bibliographystyle{plain}
\bibliography{sources}

\end{document}